\tikzset{>=latex}
\newtheorem{theorem}{Theorem}[section]
\newtheorem{lemma}[theorem]{Lemma}
\newtheorem{corollary}[theorem]{Corollary}
\theoremstyle{definition}
\newtheorem{definition}[theorem]{Definition}
\theoremstyle{remark}
\newtheorem{remark}[theorem]{Remark}
\numberwithin{equation}{section}
\let\rho\varrho
\newcommand{\setEdge}{\ensuremath{\mathcal E}}
\newcommand{\setNode}{\ensuremath{\mathcal N}}
\newcommand{\setNodeDir}{\ensuremath{\setNode_\textup D}}
\newcommand{\edge}{\ensuremath{\eta}}
\newcommand{\node}{\ensuremath{\nu}}
\newcommand{\Graph}{\ensuremath{{\mathcal G}}}
\newcommand{\SetEdge}{\ensuremath{{\mathcal E}}}
\newcommand{\SetNode}{\ensuremath{{\mathcal N}}}
\newcommand{\SetNodeBdr}{\ensuremath{\SetNode_\textup B}}
\newcommand{\SetNodeDir}{\ensuremath{\SetNode_\textup D}}
\newcommand{\SetNodeInt}{\ensuremath{\SetNode_\textup I}}
\newcommand{\Edge}{\ensuremath{E}}
\newcommand{\Node}{\ensuremath{N}}
\newcommand{\locDim}{\ensuremath{\mathfrak d}}
\let\dimEdge\locDim
\newcommand{\globDim}{\ensuremath{\mathfrak D}}
\newcommand{\trace}{\ensuremath{\gamma}}
\newcommand{\Nabla}{\ensuremath{\nabla_\Edge}}
\renewcommand{\div}{\ensuremath{\nabla\!\cdot\!}}
\newcommand{\Div}{\ensuremath{\Nabla\!\cdot\!}}
\newcommand{\Normal}{\mathbf n}
\newcommand{\jump}[1]{{[\![ #1 ]\!]}}
\newcommand{\IR}{\ensuremath{\mathbb R}}
\newcommand{\spaceH}{\ensuremath{\mathcal H}}
\newcommand{\spaceM}{\ensuremath{\mathcal M}}
\newcommand{\Hdiv}{\mathbf H^{\textup{div}}}
\newcommand{\skeletal}{\Sigma}
\newcommand{\skeletalSpace}{M}
\newcommand{\discElementSpace}{\ensuremath{V}}
\newcommand{\polynomials}{\ensuremath{\mathcal P}}
\renewcommand{\vec}[1]{\mathbf{#1}}
\DeclareMathAlphabet{\mathbfsf}{\encodingdefault}{\sfdefault}{bx}{n}
\newcommand{\dx}{\ensuremath{\, \textup d x}}
\newcommand{\ds}{\ensuremath{\, \textup d \sigma}}
\newcommand{\localU}{\ensuremath{\mathcal U}}
\newcommand{\localQ}{\ensuremath{\vec{\mathcal Q}}}
\newcommand{\ueps}{u^{\epsilon}}
\newcommand{\Eref}{E_{\mathrm{ref}}}
\newcommand{\Nref}{N_{\mathrm{ref}}}
\newcommand{\ANeps}{A_N^{\epsilon}}
\newcommand{\Aieps}{A_i^{\epsilon}}
\newcommand{\code}[1]{%
  \begingroup
  \ttfamily
  \begingroup\lccode`~=`/\lowercase{\endgroup\def~}{/\discretionary{}{}{}}%
  \begingroup\lccode`~=`[\lowercase{\endgroup\def~}{[\discretionary{}{}{}}%
  \begingroup\lccode`~=`.\lowercase{\endgroup\def~}{.\discretionary{}{}{}}%
  \begingroup\lccode`~=`_\lowercase{\endgroup\def~}{_\discretionary{}{}{}}%
  \catcode`/=\active\catcode`[=\active\catcode`.=\active\catcode`_=\active
  \scantokens{#1\noexpand}%
  \endgroup
}
\definecolor{mygreen}{rgb}{0,0.6,0}
\definecolor{mygray}{rgb}{0.5,0.5,0.5}
\definecolor{mymauve}{rgb}{0.58,0,0.82}
\definecolor{shade1}{gray}{.9}
\definecolor{shade2}{gray}{.75}
\definecolor{shade3}{gray}{.5}
\newlength{\flexcheckerboardsize}
\newcommand{\defineflexcheckerboard}[4]{
    \setlength{\flexcheckerboardsize}{#2}
    \pgfdeclarepatterninherentlycolored{#1}
        {\pgfpointorigin}{\pgfqpoint{2\flexcheckerboardsize}    
        {2\flexcheckerboardsize}}
        {\pgfqpoint{2\flexcheckerboardsize}
        {2\flexcheckerboardsize}}%
        {
            \pgfsetfillcolor{#4}
            \pgfpathrectangle{\pgfpointorigin}{
            \pgfqpoint{2.1\flexcheckerboardsize}    
                {2.1\flexcheckerboardsize}}
          \pgfusepath{fill}
          \pgfsetfillcolor{#3}
          \pgfpathrectangle{\pgfpointorigin}
            {\pgfqpoint{\flexcheckerboardsize}
            {\flexcheckerboardsize}}
          \pgfpathrectangle{\pgfqpoint{\flexcheckerboardsize}
            {\flexcheckerboardsize}}
            {\pgfqpoint{\flexcheckerboardsize}
            {\flexcheckerboardsize}}
            \pgfusepath{fill}
        }
}
\newcommand\footnoteref[1]{\protected@xdef\@thefnmark{\ref{#1}}\@footnotemark}
\begin{document}

\title[HDG on hypergraphs]{Partial differential equations on hypergraphs and networks of surfaces: derivation and hybrid discretizations} 

\author{Andreas Rupp}
\address{School of Engineering Science, Lappeenranta--Lahti University of Technology, P.O. Box 20, 53851 Lappeenranta, Finland}
\email{andreas.rupp@fau.de}

\author{Markus Gahn}
\address{Interdisciplinary Center for Scientific Computing (IWR), Heidelberg University, Mathematikon, Im Neuenheimer Feld 205, 69120 Heidelberg, Germany}
\email{markus.gahn@iwr.uni-heidelberg.de}

\author{Guido Kanschat}
\address{Interdisciplinary Center for Scientific Computing (IWR), Heidelberg University, Mathematikon, Im Neuenheimer Feld 205, 69120 Heidelberg, Germany}
\email{kanschat@uni-heidelberg.de}

\subjclass[2010]{65M60, 65N30, 68N30, 53Z99, 57N99}

\begin{abstract}
 We introduce a general, analytical framework to express and to approximate partial differential equations (PDEs) numerically on graphs and networks of surfaces---generalized by the term hypergraphs. To this end, we consider PDEs on hypergraphs as singular limits of PDEs in networks of thin domains (such as fault planes, pipes, etc.), and we observe that (mixed) hybrid formulations offer useful tools to formulate such PDEs. Thus, our numerical framework is based on hybrid finite element methods (in particular, the class of hybrid discontinuous Galerkin methods).
 \\[1ex] \noindent \textsc{Keywords.}
 surface networks, graphs, hypergraphs, HDG, HRT, BDMH, hybrid formulation, diffusion, local conservation, continuity equation.
\end{abstract}

\date{\today}
\maketitle
\section{Introduction}
This manuscript establishes a general approach to formulate partial differential equations (PDEs) on networks of (hyper)surfaces, referred to as hypergraphs. Such PDEs consist of differential expressions with respect to all hyperedges (surfaces) and compatibility conditions on the hypernodes (joints, intersections of surfaces). These compatibility conditions ensure conservation properties (in case of continuity equations) or incorporate other properties---motivated by physical or mathematical modeling.  We illuminate how to discretize such equations numerically using hybrid discontinuous Galerkin (HDG) methods, which appear to be a natural choice, since they consist of local solvers (encoding the differential expressions on hyperedges) and a global compatibility condition (related to our hypernode conditions). We complement the physically motivated compatibility conditions by a derivation through a singular limit analysis of thinning structures yielding the same results.

Albeit many physical, sociological, engineering, and economic processes have been described by partial differential equations posed on domains which cannot be described as subsets of linear space or smooth manifolds, there is still a lack of mathematical tools and general purpose software specifically addressing the challenges arising from the discretization of these models.

Fractured porous media (see \cite{BerreDK2019} for a comprehensive review) have gained substantial attention and have become an active field of research due to their critical role with respect to flow patterns in several applications in the subsurface, in material science, and in biology. Most commonly, a fracture is described as a very thin, not necessarily planar object in which, for example Darcy's equation holds. This motivates the singular limit approximation in which a fracture is assumed to be a two dimensional \emph{surface} within the three dimensional \emph{space}. When several of these fractures meet, they form a fracture network of two-dimensional surfaces. Thus, fracture networks illustrate a physical application of the type of problem we investigate in this publication.
Moreover, a model in which the joints of two (or more) fractures are assigned additional physical properties can be found in \cite{ReichenbergerJBH2006}. Beyond this, fracture networks have been simulated using hybrid high order (HHO) methods \cite{HedinPE2019}.

Graph based models for porous media (without fractures) consist of simulating preferential flow paths within the porous matrix. One of the first publications implementing this idea is \cite{Fatt1956} who observed that a network of tubes might approximate the flow of porous media better than the classical model of tube bundles, which has also been used in the most common upscaling techniques---see \cite{SchulzRZRK2019,RayRSK2018} and the references therein for a discussion of those tube models in upscaling procedures. The tube network approach \cite{Fatt1956} has been successfully applied to couple porous media flow to free (Navier--) Stokes flow \cite{WeishauptJH2019}.

PDEs on hypergraphs are especially suitable to be used in the description of elastic networks \cite{Eremeyev2019}: Here, we discriminate between one dimensional elastic beam (rod) networks, trusses, etc.\ and two dimensional elastic plate (shell) networks \cite{LagneseL1993}, respectively. Beam networks have been used to model truss bridges and towers (most prominently the Eiffel tower) and other mechanical structures, originating the field of elastic beam theory (for instance \cite{BauchauC2009} for an introduction which also covers elastic plate models). Elastic plate models describe the stability of houses and have several engineering applications such as the description of the stability of (bend) plates (used in automobile industries and several others). They have even been used to understand interseismic surface deformation at subduction zones \cite{KandaS2010}.

Elastic beam networks have been used to evaluate elastic constants in amorphous materials. That is, the elastic properties of stiff, beam like polymers have been investigated. Such polymers are key to understanding the cytoskeleton which is an important part of biological cells \cite{Heussinger2007,Lieleg2007}, but they are also important for the healing of wounds (fibrin), for skin stability (collagen), and for the properties of paper. Moreover, such models can be used for modelling rubber \cite{PhysRevE.76.031906}, foams, and fiber networks \cite{PhysRevLett.96.017802}. 

Conservation laws in the form of PDEs on hypergraphs have been used in the simulation and optimization of gas networks \cite{RuefflerMH2018} and other networks of pipelines. They have been extended to networks of traffic (streets and data), (tele-)communication, and blood flow. For an overview of the main ideas that are related to these applications, the reader may consult \cite{Garavello2010,BressanCGHP2014}. Additionally, rigorous mathematical analysis of such problems is developing to a field of current research \cite{SkreMR2021}.

We conclude the overview over some applications by stating that regular surfaces and volumes can also be interpreted as hypergraphs. Thus, PDEs on surfaces \cite{DziukE2013} and standard ``volume" problems (in which the hyperedges have the same dimension as the surrounding space and at most two hyperedges meet in a common hypernode) are also covered by our approach.

Hypergraph models usually are approximations of problems in higher dimensional networks of thin structures, for example a network of thin pipes or thin plates in 3D. As a model example we give a rigorous  derivation of a diffusion equation on a hypergraph. More precisely, we consider a network of thin plates in three dimensions, where the thickness of the plates is small compared to their length. We denote the ratio between the thickness and the length by the small parameter $0 < \epsilon \ll 1$. Due to the different scales the computational effort for numerical simulations is very high. To overcome this problem the idea is to replace the thin-structure by a hypergraph. For this we give a rigorous mathematical justification using asymptotic analysis. We pass to the limit $\epsilon \to 0$ in the weak formulation of the problem, and derive a limit problem stated on the hypergraph. The solution of this limit-problem is an approximation of the model in the higher-dimensional thin domain. Singular limits for thin plates and shells (leading to lower-dimensional manifolds in the limit $\epsilon \to 0$) in elesticity can be found in \cite{ciarlet1997mathematical,ciarlet2000theory}. Dimension reduction for a folded elastic plate is treated in \cite{le1989folded}.
Singular limits leading to hypergraphs for fluid equations can be found in \cite{maruvsic2003rigorous}, where a Kirchhoff law in a junction of thin pipes is derived, and \cite{maruvsic2019mathematical} where junctions of thin pipes and plates are treated using the method of two-scale convergence.

The remainder of this manuscript is structured as follows: First, we discuss conservation equations on hypergraphs. Second, we rigorously formulate an elliptic model equation and investigate some of its properties in Section \ref{SEC:model_eq}. Third, we discuss its discretization by means of the HDG method in Section \ref{SEC:hdg_graph}. Fourth, we discuss how PDEs on hypergpahs can be obtained by a model reduction approach, in particular, by considering singular limits. The publication is wrapped up, by a section on possible conclusions.

\section{Conservation equations on geometric hypergraphs}
%
\subsection{Hypergraphs}
A hypergraph $\Graph=(\SetNode,\SetEdge)$ consists of a finite set $\SetEdge$ of hyperedges and a finite set $\SetNode$ of hypernodes. We refer to it as a \emph{geometric hypergraph} if the hyperedges are smooth, open manifolds of dimension $\dimEdge$ with piecewise smooth, Lipschitz boundary and the hypernodes can be identified with smooth subsets of the boundaries of these hyperedges. More specifically, the boundary of each hyperedge $\Edge_e\in \SetEdge$ is subdivided into $k_e$ nonoverlapping subsets $\Gamma^e_{i}$ such that $\partial\Edge_e = \bigcup \overline{\Gamma^e_{i}}$. We associate to $\Edge_e$ an index vector $\edge^e_1,\dots,\edge^e_{k_e}$ and isometries
\begin{gather}
 \iota^e_i\colon \Gamma^e_{i} \to \Node_{\edge^e_i}, \qquad i=1,\dots,k_e.
\end{gather}
The hypernodes are thus identified with the closures of the subsets of the boundaries of one or more hyperedges. Their dimension is $\dimEdge-1$.

$\Graph$ has the structure of a hypergraph in the classical sense as each edge $\Edge_e\in\SetEdge$ connects a set of nodes $\{\Node_{\edge^e_1},\dots,\Node_{\edge^e_{k_e}}\}\subset \SetNode$. The dual hypergraph $\Graph^*=(\SetEdge,\SetNode)$ describes the situation where each hypernode $\Node_n \in \SetNode$ connects $\ell_n$ hyperedges with indices $\node^n_1,\dots,\node^n_{\ell_n}$.

We call a hypernode $\Node_n \in \SetNode$ a boundary hypernode if $\ell_n=1$, i.e., it is part of the boundary of only a single hyperedge.
Accordingly, we define the set of boundary hypernodes $\SetNodeBdr$ and the set of interior hypernodes $\SetNodeInt = \SetNode \setminus \SetNodeBdr$.
\begin{figure}
 \includegraphics[width=.8\textwidth]{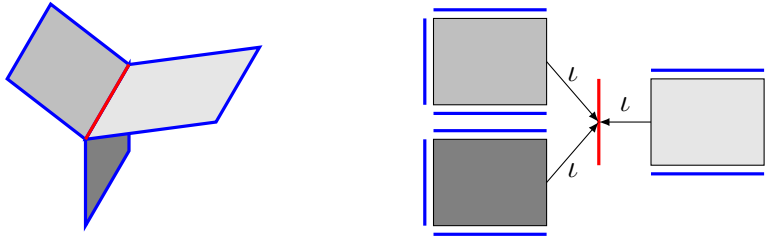}
 \caption{A hypergraph with three hyperedges of dimension 2, a hypernode (red) connecting them, and 9 boundary hypernodes (blue). Embedded hypergraph (left) in $\IR^3$ and without embedding (right). Isometries $\iota$ only shown for the interior hypernode.}\label{fig:hypergraph1}
\end{figure}

As special cases: a geometric graph is a geometric hypergraph where the edges are smooth curves and the nodes are their end points. If every hypernode is either at the boundary or connects exactly two hyperedges, the hypergraph represents a piecewise smooth manifold.

The structure might become more evident if we consider an embedded geometric hypergraph in some ambient space $\IR^\globDim$, as in Figure~\ref{fig:hypergraph1} on the left. In this case, the isometries $\iota^e_i$ are identical mappings and the hypernodes are identified with the boundary pieces $\Gamma^e_i$. On the right of this figure, the same hypergraph is displayed without embedding. In this case, the hyperedges are objects in $\IR^2$, possibly with a non-flat metric. Hypernodes are intervals in $\IR$, inheriting their metric through the isometries $\iota$.

Due to the isometries $\iota$, every point of a hypernode $\Node$ is uniquely identified with a point on the boundary of each of the hyperedges it connects. Thus, convergence of a point sequence in the union of these hyperedges to a point on the hypernode is well-defined, for instance by considering the (finitely many) subsequences on each hyperedge. Also, a distance between two points on different hyperedges sharing a hypernode is defined locally by these isometries and triangle inequality.

The domain $\Omega$ of the hypergraph, its closure, and its boundary are 
\begin{gather}\begin{split}
 \overline \Omega &= \bigcup_{\Edge\in\SetEdge} \Edge \cup \bigcup_{\Node\in\SetNode} \Node,
 \qquad \quad
 \partial\Omega = \bigcup_{\Node\in\SetNodeBdr} \Node,\\
  \Omega &= \overline \Omega \setminus \partial \Omega.
 \label{EQ:def_omega}
\end{split}
\end{gather}
In this definition, the hyperedges are considered open with respect to their topology and do not contain their boundaries.
The hypernodes are closed. We introduce the skeletal domain
\begin{gather}
    \skeletal = \bigcup_{\Node\in\SetNode} \Node.
\end{gather}
We make the assumption that $\Omega$ is connected. Note that this implies that any two hyperedges are either connected by a common node or not connected, since $\Omega$ is open, see \eqref{EQ:def_omega}.
Without such an assumption, the problems of partial differential equations below separate into subproblems, which then can be analyzed and solved independently.

In \autoref{fig:hypergraph1}, $\partial \Omega$ comprises all blue hypernodes, which also include the end points of the red hypernode. The union of the red and blue hypernodes is $\skeletal$. The domain $\Omega$ consists of the interior of the red hypernode and the the three hyperedges.

Many concepts of standard domains in $\IR^\dimEdge$ transfer to $\Omega$, even if it is not a manifold. In particular, the notion of a small open ball $B_r(x)$ with radius $r > 0$ around $x \in \Omega$, see \autoref{fig:hg_ball}, in $\Omega$ is maintained by construction and thus the notion of open subsets. A subset is called compactly embedded in $\Omega$ if its closure is contained in $\Omega$ and thus has a positive distance to $\partial\Omega$.
\begin{figure}
 \includegraphics[width=.8\textwidth]{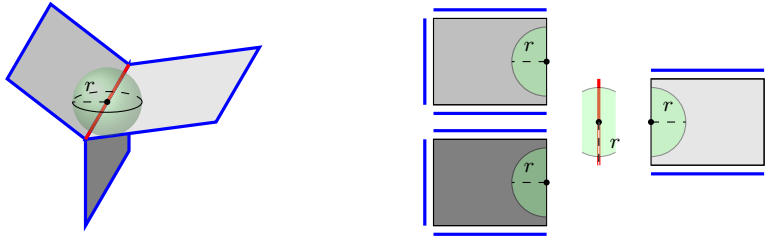}
 \caption{The hypergraph of \autoref{fig:hypergraph1} with the illustration of an open ball. The ball's center is located on the shared hypernode, and its radius is $r$.}\label{fig:hg_ball}
\end{figure}

A function is continuous on $\Omega$, if it is continuous inside each hyperedge and its limits on a hypernode are consistent between all hyperedges connected by this hypernode. Analogously, a function is in $L^2(\Omega)$ if it is in $L^2(\Edge)$ for all $\Edge \in \SetEdge$ and it is in $L^2(\Sigma)$ if it is in $L^2(\Node)$ for all $\Node \in \SetNode$.
\begin{remark}[Comparison to standard nomenclatures]
 In this article, we mix concepts from graph theory, partial differential equations, and finite elements. Thus, a clash of names was unavoidable. What is referred to as a hypernode here, is a face ---an edge in two dimensions--- in finite element literature, while the hyperedges here correspond to mesh cells or elements. In order to reduce ensuing confusion, we consistently use the term ``hyperedge''. Another difference to finite element literature is established by the fact that we consider the hypergraph fixed and are not concerned with refinement limits. Finally, we would like to point out that there has been a concept of geometric hypergraphs in the literature; it is nevertheless very limited, such that we coin this term in a new way here, meaning a hypergraph whose elements are geometric shapes themselves.
\end{remark}
\subsection{Continuity equations on hypergraphs}\label{SEC:cont_eq}
Next, we conduct a heuristic derivation, employing control volumes $V$ in the shape of infinitesimal, open hyperballs.
Let $\rho$ be a conserved quantity and $\vec J$ be its flux. Then, the conservation property of $\rho$ is usually stated in integral form such that for any such control volume $V$ there holds
\begin{gather}
    \label{EQ:conservation-1}
    \frac{d}{dt} \int_V \rho\dx = - \int_{\partial V} \vec J\cdot\Normal \ds.
\end{gather}
When $V$ is a subset of a Lipschitz manifold $\Edge$, the meaning of this statement is clear if $\vec J$ is a smooth tangential vector field in $\bar \Edge$ and $\Normal$ is the outer normal vector to $\partial V$ in the tangential plane of $\Edge$. The term $\dx$ denotes the volume element of the manifold, and $\ds$ is the induced surface element.

If the hyperball $V$ intersects a hypernode $\Node$ in which several edges meet, meaning can be given to equation~\eqref{EQ:conservation-1} by the following observation: if $\Edge_1,\dots,\Edge_\ell$ are the hyperedges which meet in $\Node$ inside $V$, then for $i\in\{1,\dots,\ell\}$ the intersection $V_i = V \cap \Edge_i$ has a piecewise smooth boundary $\partial V_i$. We observe that $\Node_V = V\cap\Node$ is in the interior of $V$ (see Figure~\ref{fig:hg_ball} for an illustration) and the boundary of $V$ is nowhere tangential to $\Node$. Thus, with the assumption that no mass is created or destroyed in the hypernode $\Node$, the conservation property~\eqref{EQ:conservation-1} can be restated as
\begin{gather}
 \frac{d}{dt} \int\limits_V \rho\dx
 = \frac{d}{dt}\sum_{i=1}^\ell \int\limits_{V_i} \rho\dx
 = - \sum_{i=1}^\ell \int\limits_{\partial V_i\setminus \Node_V} \vec J\cdot\Normal \ds.\label{EQ:conservation-2}
\end{gather}
Again, the flux $\vec J$ and the outer normal vector $\Normal$ to $V$ are well defined along $\partial V_i$ in the tangential plane of $\Edge_i$.

As a generalization of \eqref{EQ:conservation-2}, we allow for sinks and sources $f$ living in the hyperedges and $g$ living within hypernode $\Node$: This can be implemented by setting
\begin{equation}
 \frac{d}{dt} \int\limits_V \rho \dx = - \sum_{i=1}^\ell \int\limits_{\partial V_i \setminus \Node_V} \vec J\cdot\Normal \ds 
 + \int\limits_V f \dx + \int\limits_{\Node_V} g \ds,\label{EQ:balance_cond}
\end{equation}
where positive $f$ and $g$ describe sources, while negative $f$ and $g$ describes sinks.

Before we convert~\eqref{EQ:conservation-1} into a problem of partial differential equations, we make the simplifying assumption that the hyperedges and hypernodes are planar and that $\dx$ is the standard Lebesgue measure.
This way, we avoid delving into the complexities of surface partial differential equations. This simplification is purely for the ease of presentation and we refer the readers to \cite{DziukE2013} and \cite{BENARTZI2007989} for more general surfaces in the elliptic and hyperbolic settings, respectively.

Thus, in the interior of each hyperedge $\Edge$, we can apply Gauss' divergence theorem in standard form to obtain
\begin{gather}
    \int_{\partial V} \vec J \cdot\Normal\ds
    = \int_V \div \vec J \dx.
\end{gather}
If on the other hand $V$ overlaps a hypernode $\Node$ which connects hyperedges $\Edge_1\dots,\Edge_\ell$, we can still apply the divergence theorem in each hyperedge to obtain
\begin{gather}
    \int_{\partial V} \vec J \cdot\Normal\ds
    = \sum_{i=1}^{\ell} \left[
      \int_{V_i}\div \vec J \dx
      - \int_{\Node_V}
      \vec J\cdot\Normal\ds
      \right].
\end{gather}
This notion also extends to control volumes $V$ intersecting with several hypernodes in a natural way. Then, rearranging the sum over boundary integrals yields
\begin{gather}
    \label{EQ:conservation-3}
    \int_{\partial V} \vec J \cdot\Normal\ds
    = \sum_{\Edge\in\SetEdge} \int\limits_{V\cap \Edge}\div \vec J \dx
    - \sum_{\Node\in\SetNodeInt}
    \int\limits_{V \cap \Node} \jump{\vec J\cdot \Normal} \ds,
\end{gather}
for any control volume $V\subset\Omega$.
Here, $\div \vec J$ is the standard divergence of the differentiable vector field and $\jump{\cdot}$ is the summation operator such that on a hypernode $\Node$ with hyperedges $\Edge_1,\dots,\Edge_\ell$ there holds
\begin{gather}
    \label{EQ:divergence_node}
    \jump{\vec J\cdot \Normal}
    = \sum_{i=1}^{\ell} \vec J_{|\Edge_i}\cdot\Normal_{\Edge_i}.
\end{gather}
A vector field $\vec J$ is usually called solenoidal, if the left side of equation~\eqref{EQ:conservation-3} vanishes for any control volume $V$. The right hand side of this equation generalizes this notion from standard domains to hypergraphs.
Therefore, we call a piecewise smooth vector field $\vec J$ solenoidal, if
\begin{subequations}
\begin{align}
    \div \vec J(x) &= 0 && \text{ for all } x\in \Edge \text{ and } \Edge \in \SetEdge,\\
    \label{EQ:Kirchhoff-node}
    \jump{\vec J\cdot \Normal}(x) &= 0& & \text{ for all } x\in \Node \text{ and } \Node \in \SetNodeInt.
\end{align}
\end{subequations}
Note that the second condition is an extension of Kirchhoff's junction rule from points to higher dimensional hypernodes.

To put it in a nutshell, assuming there are no leaks and sources in hypernodes and hyperedges, the conservation condition~\eqref{EQ:conservation-2} induces the PDE--interface problem to find $\rho$ and $\vec J$ such that
\begin{subequations}\label{EQ:continuity}
\begin{align}
 \partial_t \rho + \div \vec J &= 0 && \text{ in all } \Edge \in \SetEdge,\\
 \jump{\vec J\cdot \Normal} &= 0& & \text{ on all } \Node \in \SetNodeInt.
\end{align}
\end{subequations}

Analogously, the continuity condition \eqref{EQ:balance_cond} induces the PDE interface problem to find $\rho$ and $\vec J$ such that
\begin{subequations}\label{EQ:balance}
\begin{align}
 \partial_t \rho + \div \vec J &= f && \text{ in all } \Edge \in \SetEdge,\\
 \jump{\vec J\cdot \Normal} &= g & & \text{ on all } \Node \in \SetNodeInt.
\end{align}
\end{subequations}

In \eqref{EQ:continuity} and \eqref{EQ:balance}, $\rho$ and $\vec J$ might be linked by some phenomenological description, i.e., $\vec J = \vec J(\rho)$ (depending on the specific application). Both equations are complemented by appropriate initial and boundary conditions. Beyond this, additional continuity constraints might be formulated, such as $\rho \in C(\Omega)$, $\rho \in C^\infty(\bigcup \Edge)$, \ldots.
\begin{remark}\label{REM:hybrid_formulation}
 The interface problems in \eqref{EQ:continuity} and \eqref{EQ:balance} resemble the \emph{hybrid} or \emph{hybridized} formulation of a PDE, which was introduced for instance in context of the mixed elements of Raviart--Thomas and Brezzi--Douglas--Marini in \cite{RaviartT1977a,RaviartT1977b,BrezziDM1985}.
\end{remark}
We are, exemplary, going to discuss the continuity equation~\eqref{EQ:continuity} in the context of diffusion 
problems in section \ref{SEC:model_eq}.
%
\section{Elliptic model equation}\label{SEC:model_eq}
%
The standard diffusion equation in mixed form defined on a hypergraph $\Graph = (\SetEdge, \SetNode)$ is a conservation equation of type~\eqref{EQ:continuity} for the flux $\vec J = -\kappa\nabla u$ of a scalar function $u$. This is for instance known as Fourier's law of thermal conduction, where $u$ is the temperature and $\kappa$ is the dimensionless heat conductivity of the material. It is also Fick's law of diffusion where $u$ is a concentration and $\kappa$ is the diffusion coefficient.

Like in the previous section, we simplify the presentation by assuming that all hyperedges are flat and thus can be identified with a domain in $\IR^\dimEdge$. In the more general case, the differential operators must be replaced by their differential geometric counterparts as in \cite{DziukE2013}.

We focus on the stationary case and set the time derivative in~\eqref{EQ:continuity} to zero. Thus, the discussion of the previous section leads to the following problem: find $u$ satisfying
\begin{subequations}\label{EQ:diffusion_primal}
\begin{align}
 - \div (\kappa \nabla u ) & = f && \text{ in all } \Edge\in\SetEdge, \label{EQ:diffusion_eq}\\
 u & = u_\textup D && \text{ on all } \Node \in \SetNodeDir, \label{EQ:Dirichlet}\\
 u_{|\Edge_1} & = u_{|\Edge_2} && \text{ on all } \Node \in \SetNode,\; \Node \subset \partial \Edge_1 \cap \partial \Edge_2,\label{EQ:diffusion_cont}\\
 - \jump{\kappa \nabla u \cdot \Normal} & = g && \text{ on all } \Node \in \SetNode \setminus \SetNodeDir, \label{EQ:interface}
\end{align}
\end{subequations}
for all $\Edge_1, \Edge_2 \in \SetEdge$, right hand sides $f$ and $g$, and a diffusion coefficient $\kappa \ge \kappa_0 > 0$. A justification by taking the limit of thin domains can be found in Section~\ref{SEC:limit} below.

We observe that in~\eqref{EQ:diffusion_primal} the diffusion equation \eqref{EQ:diffusion_eq} is complemented by three  boundary and interface conditions.
First, it is closed by a ``Dirichlet'' boundary condition~\eqref{EQ:Dirichlet}: We choose a non-empty set $\SetNodeDir \subset \SetNode$ of ``Dirichlet'' hypernodes, on which we impose $u = u_\textup D$ for a prescribed boundary value $u_D$.
In~\eqref{EQ:diffusion_cont}, we employ a continuity constraint. This constraint prohibits jumps in the primary unknown across interior nodes, and therefore, loosely speaking, imitates the standard constraint that $u \in H^1$ of the domain.

On interior nodes $\Node \in \SetNodeInt \subset \SetNode \setminus \SetNodeDir$, we set out with Kirchhoff's junction law, but with the option of a concentrated source $g$ in~\eqref{EQ:interface}. This equation also incorporates the Neumann condition $\vec -\kappa \nabla u \cdot \Normal = g$, since on a boundary hypernode the sum in the definition~\eqref{EQ:divergence_node} of the operator $\jump{\cdot}$ reduces to a single hyperedge. Note that \eqref{EQ:interface} for $g=0$ on interior nodes serves as compatibility condition for mimicking $-\kappa \nabla u \in H^\textup{div}$.
\begin{definition}[Function spaces on hypergraphs]\label{DEF:h}
 For each $\Edge\in\SetEdge$ let $H^1(\Edge)$ be the standard Sobolev space on $\Edge$ and $\gamma\colon H^1(\Edge)\to H^{1/2}(\partial\Edge)$ be the standard trace operator.
 
 Then, we define
 \begin{equation}
  \spaceH = \left\{ u \in \bigoplus_{\Edge \in \SetEdge} H^1(\Edge) \;\middle|\; \begin{array}{@{\,}c@{\, }} \gamma_1 u = \gamma_2u \\ \text{on } \Node = \partial \Edge_1 \cap \partial \Edge_2, \; \Node \in \SetNode \end{array} \right\},
 \end{equation}
 where $\gamma_1u$ and $\gamma_2u$ are the traces of $u$ from the hyperedges $\Edge_1$ and $\Edge_2$ on $\Node$, respectively.
 Due to the equality of traces in the definition of $\spaceH$, we can define the trace operator to the skeleton
 \begin{equation}
  \trace \colon \spaceH \to \spaceM := \left\{ \mu \in L^2(\skeletal) \;\middle|\; \begin{array}{@{\,}c@{\, }} \mu_{|\partial \Edge} \in H^{1/2}(\partial \Edge) \\ \text{for all } \Edge \in \setEdge \end{array} \right\}.
 \end{equation}
 
 Additionally, the spaces $\spaceH_0$ and $\spaceM_0$ are defined as
 \begin{align}
  \spaceM_0 := & \{ \mu \in \spaceM \colon \mu_{|\Node} = 0 \text{ for all } \Node \in \SetNodeDir \},\\
  \spaceH_0 := & \{ u \in \spaceH \colon \trace u \in \spaceM_0 \},
 \end{align}
 and we denote the  dual spaces of $\spaceH_0$ by $\spaceH_0^\star$ and of $\spaceM_0$ by $\spaceM_0^\star$.
 
 Norms ($\|\cdot\|_\spaceH$ and $\|\cdot\|_\spaceM$) on the respective spaces ($\spaceH$ and $\spaceM$) are induced by summed versions of the local scalar-products:
 \begin{align}
  ( u, v )_\spaceH := & \sum_{\Edge \in \SetEdge} (u_{|\Edge}, v_{|\Edge})_{H^1(\Edge)}, &
  \| u \|^2_\spaceH := & (u, u)_\spaceH, \\
 \langle \lambda, \mu \rangle_\spaceM := & \sum_{\Edge \in \SetEdge} \langle \lambda_{|\Edge}, \mu_{|\Edge}\rangle_{H^{1/2}(\partial \Edge)}, &
 \| \mu \|^2_\spaceM := & \langle \mu, \mu \rangle_\spaceM,
 \end{align}
 such that
 \begin{gather}
     \| u \|^2_\spaceH = \sum_{\Edge \in \SetEdge} \| u_{|\Edge} \|^2_{H^1(\Edge)}
     \quad\text{and}\quad
     \| \mu \|^2_\spaceM = \sum_{\Edge \in \SetEdge} \| \mu_{|\partial \Edge} \|^2_{H^{1/2}(\partial \Edge)}.
 \end{gather}
\end{definition}
These definitions have a few immediate consequences:
\begin{enumerate}
 \item $\trace\colon \spaceH \to \spaceM$ is a well-defined and surjective, linear, and continuous operator.
 \item We have the Gelfand triple relations
 \begin{gather}
  \spaceH_0 \hookrightarrow L^2(\Omega) \cong [L^2(\Omega)]^\star \hookrightarrow \spaceH_0^\star,\\
  \spaceM_0 \hookrightarrow L^2(\Sigma) \cong [L^2(\Sigma)]^\star \hookrightarrow \spaceM_0^\star.
 \end{gather}
\end{enumerate}
 Note that $\spaceM^\star$ is analogous to space $M$ in of Raviart and Thomas \cite{RaviartT1977a}.

\begin{lemma}
 The space $\spaceH$ with inner product $(\cdot, \cdot)_\spaceH$ is a Hilbert space.
\end{lemma}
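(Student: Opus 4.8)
The plan is to realise $\spaceH$ as a closed linear subspace of the Hilbert space $\bigoplus_{\Edge\in\SetEdge} H^1(\Edge)$ equipped with the sum inner product; then completeness of $\spaceH$ follows from completeness of the ambient space, and the inner-product axioms are inherited.

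First I would observe that $\bigoplus_{\Edge\in\SetEdge} H^1(\Edge)$ is a Hilbert space: the index set $\SetEdge$ is finite, each $H^1(\Edge)$ is a Hilbert space, and $(u,v)_\spaceH = \sum_{\Edge\in\SetEdge} (u_{|\Edge},v_{|\Edge})_{H^1(\Edge)}$ is precisely the inner product turning the finite direct sum into a Hilbert space. Restricted to $\spaceH$ it is still symmetric, bilinear and positive definite, so $(\cdot,\cdot)_\spaceH$ is an inner product on $\spaceH$. Since the trace-matching constraints $\gamma_1 u = \gamma_2 u$ defining $\spaceH$ are linear, $\spaceH$ is a linear subspace, and it therefore remains only to show that $\spaceH$ is closed in $\bigoplus_{\Edge\in\SetEdge} H^1(\Edge)$.

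For closedness I would take a sequence $(u^{(k)})_k\subset\spaceH$ with $u^{(k)}\to u$ in $\bigoplus_{\Edge\in\SetEdge} H^1(\Edge)$ and show that the constraints survive the limit. Fix a hypernode $\Node\in\SetNode$ shared by hyperedges $\Edge_1,\Edge_2$, identified via the isometries with boundary pieces $\Gamma^{}_1\subset\partial\Edge_1$ and $\Gamma^{}_2\subset\partial\Edge_2$. Because $u^{(k)}_{|\Edge_j}\to u_{|\Edge_j}$ in $H^1(\Edge_j)$ and the trace map $\gamma\colon H^1(\Edge_j)\to H^{1/2}(\partial\Edge_j)\hookrightarrow L^2(\partial\Edge_j)$ is continuous, we get $\gamma_j u^{(k)}\to\gamma_j u$ in $L^2(\Gamma^{}_j)$, hence, transporting along the isometry, in $L^2(\Node)$, for $j=1,2$. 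Since $\gamma_1 u^{(k)}=\gamma_2 u^{(k)}$ on $\Node$ for every $k$, uniqueness of $L^2$-limits gives $\gamma_1 u=\gamma_2 u$ in $L^2(\Node)$, i.e.\ the required equality of traces. As $\Node$ was arbitrary, $u\in\spaceH$, so $\spaceH$ is closed, hence complete.

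I do not expect a genuine obstacle here; the only point requiring care is the bookkeeping with the isometries $\iota^e_i$ when one compares traces that a priori live on the boundaries of different hyperedges—one must phrase trace convergence on the abstract hypernode $\Node$ itself, not on any single $\partial\Edge$. Passing to $L^2(\Node)$-convergence (rather than working in $H^{1/2}$) is what makes this clean: it avoids the mild subtlety of restricting an $H^{1/2}(\partial\Edge)$ function to the sub-piece $\Gamma^e_i$, while still being enough to carry the almost-everywhere equality to the limit.
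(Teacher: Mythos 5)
Your proof is correct and follows essentially the same route as the paper: both realise $\spaceH$ as a linear subspace of the Hilbert space $\bigoplus_{\Edge\in\SetEdge} H^1(\Edge)$ and establish closedness via continuity of the traces into $L^2(\Node)$ — the paper phrases this as $\spaceH$ being the kernel of a continuous jump functional, while you argue sequentially, but the underlying ingredient is identical.
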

\begin{proof}
 Obviously, $\spaceH$ is a subspace of the Hilbert space $\bigoplus H^1(\Edge)$, and the function
 \begin{equation*}
  h \colon \bigoplus_{\Edge \in \SetEdge} H^1(\Edge) \ni u \mapsto \sum_{\Node \in \SetNodeInt} \sum_{\bar \Edge_1,\bar \Edge_2 \supset \Node} \| u|_{\Edge_1} - u|_{\Edge_2} \|^2_{L^2(\Node)} \in \IR
 \end{equation*}
 is continuous and $\spaceH$ is its kernel. Thus, $\spaceH$ is closed.
\end{proof}
\begin{definition}
\label{DEF:weak_sol}
 A weak solution to the primal formulation of \eqref{EQ:diffusion_primal} with $\kappa \in L^\infty(\Omega)$, $f \in \spaceH_0^\star$, and $g \in \spaceM_0^\star$ is a function $u \in \spaceH$ with $\gamma u = u_\textup D$ on all $\Node \in \SetNodeDir$, and
 \begin{equation}\label{EQ:weak_diff}
  \sum_{\Edge \in \SetEdge} \int_\Edge \kappa \nabla u \cdot \nabla v \dx = \langle f, v\rangle_{\spaceH_0^\star, \spaceH_0}  - \langle g, v \rangle_{\spaceM_0^\star, \spaceM_0} \qquad \forall v \in \spaceH_0.
 \end{equation}
\end{definition}
In particular, if $f \in L^2(\Omega)$ and $g \in L^2(\Sigma)$, we can rewrite \eqref{EQ:weak_diff} as
\begin{equation}
  \sum_{\Edge \in \SetEdge} \int_\Edge \kappa \nabla u \cdot \nabla v \dx = \int_\Omega f v \dx  - \sum_{\Node \in \SetNode} \int_\Node g v \ds \qquad \forall v \in \spaceH_0.
\end{equation}
%
\subsection{Existence and uniqueness of solutions}
\label{SEC:ExUnique}
%
\begin{theorem}\label{TH:ex_uni}
 Assume for $u_\textup D$ that there is a lifting $\bar u_\textup D \in \spaceH$ with $\bar u_\textup D = u_\textup D$ on all $\Node \in \SetNodeDir$. If $\kappa \in L^{\infty}(\Omega)$ with $\kappa \ge \kappa_0 > 0$ a.~e., $f \in \spaceH_0^\star$, $g \in \spaceM_0^\star$, and all $\Edge \in \SetEdge$ are Lipschitz domains, there is an unique weak solution $u$ according to Definition~\ref{DEF:weak_sol}, which continuously depends on the data.
\end{theorem}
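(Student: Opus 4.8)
The plan is to recast the boundary value problem \eqref{EQ:diffusion_primal} as a coercive variational problem on the Hilbert space $\spaceH_0$ and to invoke the Lax--Milgram lemma. First I would homogenize the Dirichlet data: using the lifting $\bar u_\textup D \in \spaceH$ guaranteed by hypothesis, a function $u$ is a weak solution in the sense of Definition~\ref{DEF:weak_sol} precisely when $u_0 := u - \bar u_\textup D$ belongs to $\spaceH_0$ and solves
\[
  a(u_0, v) := \sum_{\Edge \in \SetEdge} \int_\Edge \kappa \, \nabla u_0 \cdot \nabla v \dx
  = \langle f, v \rangle_{\spaceH_0^\star, \spaceH_0}
  - \langle g, \trace v \rangle_{\spaceM_0^\star, \spaceM_0}
  - a(\bar u_\textup D, v) =: F(v)
  \qquad \text{for all } v \in \spaceH_0 .
\]
Boundedness of the bilinear form $a$ on $\spaceH \times \spaceH$ is immediate from $\kappa \in L^\infty(\Omega)$ and the Cauchy--Schwarz inequality, with constant $\|\kappa\|_{L^\infty(\Omega)}$; and $F$ is a bounded linear functional on $\spaceH_0$ because $\trace \colon \spaceH \to \spaceM$ is continuous (as noted right after Definition~\ref{DEF:h}) and $\bar u_\textup D \in \spaceH$, so that $\|F\|_{\spaceH_0^\star} \le C\big(\|f\|_{\spaceH_0^\star} + \|g\|_{\spaceM_0^\star} + \|\bar u_\textup D\|_\spaceH\big)$ with $C$ depending only on $\|\kappa\|_{L^\infty(\Omega)}$ and the norm of $\trace$.

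The essential step is the coercivity of $a$ on $\spaceH_0$, which I would derive from a Poincaré-type inequality on the hypergraph: there is a constant $C_P > 0$ such that $\sum_{\Edge \in \SetEdge} \|v\|_{L^2(\Edge)}^2 \le C_P \sum_{\Edge \in \SetEdge} \|\nabla v\|_{L^2(\Edge)}^2$ for every $v \in \spaceH_0$; granting this, $a(v,v) \ge \kappa_0 \sum_{\Edge} \|\nabla v\|_{L^2(\Edge)}^2 \ge \tfrac{\kappa_0}{1+C_P}\|v\|_\spaceH^2$. I would prove the inequality by contradiction and compactness. If it were false there would be a sequence $(v_n) \subset \spaceH_0$ with $\|v_n\|_\spaceH = 1$ and $\sum_{\Edge} \|\nabla v_n\|_{L^2(\Edge)}^2 \to 0$. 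Since each $\Edge \in \SetEdge$ is a Lipschitz domain, Rellich's theorem furnishes a subsequence converging in $\bigoplus_{\Edge} L^2(\Edge)$; because in addition $\nabla v_n \to 0$ in $\bigoplus_{\Edge} L^2(\Edge)$, this subsequence is Cauchy, hence convergent, in $\bigoplus_{\Edge} H^1(\Edge)$, with limit $v$ satisfying $\nabla v = 0$ on every hyperedge. By closedness of $\spaceH$ (the preceding lemma), continuity of $\trace$, and closedness of $\spaceM_0$ in $\spaceM$, the limit $v$ lies in $\spaceH_0$; being locally constant on the hyperedges and having matching traces across each shared hypernode, $v$ takes the same constant value on all hyperedges by the connectedness of $\Omega$, and $\trace v = 0$ on the non-empty Dirichlet set $\SetNodeDir$ forces that constant to vanish. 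But then $v_n \to v = 0$ in $\bigoplus_{\Edge} H^1(\Edge)$, contradicting $\|v_n\|_\spaceH = 1$.

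With boundedness, coercivity, and $F \in \spaceH_0^\star$ established, the Lax--Milgram lemma provides a unique $u_0 \in \spaceH_0$ with $\|u_0\|_\spaceH \le \tfrac{1+C_P}{\kappa_0}\|F\|_{\spaceH_0^\star}$, so that $u := u_0 + \bar u_\textup D \in \spaceH$ is a weak solution with $\trace u = u_\textup D$ on $\SetNodeDir$; it is the only one, since the difference of two weak solutions lies in $\spaceH_0$ and is annihilated by the coercive form $a$. Combining the two displayed bounds yields the asserted continuous dependence on $f$, $g$, and the boundary data. The main obstacle is the Poincaré inequality; within its proof the delicate point is to check that the $H^1$-limit $v$ genuinely belongs to $\spaceH_0$ and that the trace identities at the hypernodes --- read through the isometries $\iota^e_i$ --- survive passage to the limit, which is exactly where closedness of $\spaceH$ and connectedness of $\Omega$ are used.
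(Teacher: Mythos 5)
Your proposal is correct and follows essentially the same route as the paper: homogenize the Dirichlet data via the lifting, establish coercivity on $\spaceH_0$ through a Poincar\'e--Friedrichs inequality proved by contradiction and Rellich compactness (using closedness of $\spaceH_0$, connectedness of $\Omega$, and the nonempty Dirichlet set to force the limiting locally constant function to vanish), and conclude with Lax--Milgram. The only cosmetic differences are that you normalize the contradicting sequence in the full $\spaceH$-norm and upgrade to strong $H^1$-convergence, whereas the paper normalizes in $L^2(\Omega)$ and works with weak $H^1$-limits; both variants are sound.
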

\begin{proof}
Due to the existence of $\bar u_\textup D$, we can reduce the problem to the one with homogeneous Dirichlet values if we replace $u$ by $u-\bar u_\textup D$ and modifying the right hand side accordingly. Since the right hand side is bounded and $\spaceH_0$ is a Hilbert space, it suffices to show ellipticity of the weak form to conclude the proof by the Lax--Milgram lemma. We note that for $v\in \spaceH_0$ there holds
\begin{gather}
    \sum_{\Edge \in \SetEdge} \int_\Edge \kappa \nabla v \cdot \nabla v \dx 
    \ge \kappa_0 \sum_{\Edge \in\SetEdge} \| \nabla v \|^2_{L^2(\Edge)}
\end{gather}
Thus, the following Poincaré--Friedrichs inequality implies ellipticity and concludes the proof.
\end{proof}

\begin{lemma}[Poincaré--Friedrichs inequality for $\spaceH_0$]\label{LEM:Poincare1}
For all $v \in \spaceH_0$ it holds that
\begin{align*}
 \Vert v \Vert_{L^2(\Omega)} \le C  \sum_{\Edge \in \SetEdge} \| \nabla v \|_{L^2(\Edge)}.
\end{align*}
 \end{lemma}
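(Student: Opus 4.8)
\emph{Proof proposal.} The plan is to argue by contradiction via a compactness (Rellich) argument, exploiting that $\SetEdge$ is finite, that each $\Edge\in\SetEdge$ is a bounded, connected Lipschitz domain, that $\Omega$ is connected, and that $\SetNodeDir$ is nonempty. Suppose the claimed estimate fails for every constant. Then for each $n\in\IN$ there is $v_n\in\spaceH_0$ with $\|v_n\|_{L^2(\Omega)}=1$ and $\sum_{\Edge\in\SetEdge}\|\nabla v_n\|_{L^2(\Edge)}\le 1/n$. In particular $(v_n)$ is bounded in $\bigoplus_{\Edge\in\SetEdge}H^1(\Edge)$.

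Applying the Rellich--Kondrachov theorem on each of the finitely many Lipschitz hyperedges and extracting a diagonal subsequence (not relabeled), I obtain a limit $v$ with $v_n\to v$ strongly in $L^2(\Edge)$ for every $\Edge\in\SetEdge$, hence $v_n\to v$ in $L^2(\Omega)$, so $\|v\|_{L^2(\Omega)}=1$. Since also $\nabla v_n\to 0$ in $L^2(\Edge)$, the convergence is in fact in $H^1(\Edge)$ for each $\Edge$, with $\nabla v=0$ on each hyperedge; therefore $v_{|\Edge}$ is a constant $c_\Edge$ on every (connected) hyperedge $\Edge$. It remains to show $v\equiv 0$, contradicting $\|v\|_{L^2(\Omega)}=1$.

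For this I use the trace structure. As $v_n\to v$ in $H^1(\Edge)$ for each $\Edge$, continuity of the local trace operators gives convergence of the edgewise traces in $H^{1/2}(\partial\Edge)$; since $\spaceH$ is closed (preceding Lemma), $v\in\spaceH$, and since $\trace$ is continuous and $(\trace v_n)_{|\Node}=0$ for all $\Node\in\SetNodeDir$, the same holds for $v$. Now, if two hyperedges $\Edge_1,\Edge_2$ share a hypernode $\Node$, then $\trace_1 v=\trace_2 v$ on $\Node$ by the defining property of $\spaceH$; evaluating on the constants $c_{\Edge_1},c_{\Edge_2}$ and using that $\Node$ has positive $(\locDim-1)$-dimensional measure yields $c_{\Edge_1}=c_{\Edge_2}$. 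Because $\Omega$ is connected, any two hyperedges are linked by a chain of hyperedges consecutively sharing hypernodes (see the remark after \eqref{EQ:def_omega}), so all $c_\Edge$ coincide with a single constant $c$. Finally, pick $\Node\in\SetNodeDir$ (nonempty by assumption) and a hyperedge $\Edge$ with $\Node\subset\partial\Edge$; then $0=(\trace v)_{|\Node}=c_\Edge=c$, hence $v\equiv 0$, the desired contradiction.

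The main obstacle, and the only genuinely hypergraph-specific point, is the propagation step in the last paragraph: one must make precise that connectedness of the open set $\Omega$ is equivalent to connectedness of the adjacency (hyper)graph of hyperedges, and one must check carefully that the interface conditions defining $\spaceH$ and $\spaceH_0$ pass to the $H^1$-limit. A constructive alternative avoiding compactness would be to fix a spanning tree of this adjacency graph rooted at a hyperedge meeting $\SetNodeDir$ and chain together, hyperedge by hyperedge, the standard Poincaré inequality with an interface-average term; this produces an explicit but geometry-dependent constant and is notationally heavier, so I would present the compactness argument above.
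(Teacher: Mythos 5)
Your proposal is correct and follows essentially the same route as the paper: a contradiction argument with a normalized sequence, Rellich--Kondrachov compactness on each of the finitely many Lipschitz hyperedges, vanishing gradients forcing the limit to be edgewise constant, and then trace matching across hypernodes plus connectedness of $\Omega$ and the Dirichlet condition to force the limit to vanish, contradicting the $L^2$ normalization. Your treatment of the propagation step (trace equality on hypernodes of positive measure, chaining through the adjacency graph) is somewhat more explicit than the paper's brief ``constant and overall continuous, hence overall constant'' statement, but it is the same argument.
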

 \begin{proof}
 Similar to the standard case of subdomains in $\IR^{\locDim}$,
 this inequality follows easily by contradiction:
 To this end, we assume that there is a sequence $(v_n)_{n=1,\ldots} \subset \spaceH_0$ with
 \begin{equation}
  \lVert v_n \rVert_{L^2(\Omega)} =1
  \qquad\text{and}\qquad
  \sum_{\Edge \in \SetEdge} \lVert \nabla v_n \rVert_{L^2(\Edge)} \le \frac1n.
 \end{equation}
 Thus, $v_{n|\Edge}$ is bounded in $H^1(\Edge)$ for all $\Edge \in \SetEdge$. Hence, by the weak compactness of the unit ball in $H^1(E)$ and the Rellich-Kondrachov theorem, there exists a subsequence (also denoted $v_n$) such that
 \begin{equation}
  v_{n|\Edge} \rightarrow \tilde v_{|\Edge} \text{ in } {L^2(\Edge)},
  \qquad \text{and} \qquad
  v_{n|\Edge} \rightharpoonup \tilde v_{|\Edge} \text{ in } H^1(\Edge).
 \end{equation}
 We have that $\tilde v \in \spaceH_0$ (due to its completeness), and that the seminorm $\sum_{\Edge \in \SetEdge} \|\nabla \tilde v\|_{L^2(\Edge)} = 0$. Thus, $\tilde v$ is constant in all $\Edge \in \SetEdge$ and overall continuous. Therefore it is overall constant and has to be zero, due to the zero boundary condition on Dirichlet nodes and the connectedness of $\Omega$. Hence, the strong convergence of $v_{n|E}$ in $L^2(E)$ implies $\lVert v_{n|E} \rVert_{L^2(E)} \rightarrow 0$, which contradicts $\| \tilde v \|_{L^2(\Omega)} = 1$. Therefore, the  Poincaré--Friedrichs inequality is valid.
\end{proof}
%
\section{HDG method for elliptic model equation}\label{SEC:hdg_graph}
%
When we derived PDE problems on hypergraphs, we were led to a formulation local on each hyperedge with coupling conditions on hypernodes. This is a structure which is nicely reflected in hybridized methods. Indeed, there the separation goes one step further. By putting degrees of freedom on the hypernode, values on hyperedges are not coupling anymore to other hyperedges across these hypernodes, but only to the values on the hypernodes constituting their boundary. Thus, differing from standard or discontinuous finite element methods, the number of hyperedges attached to a hypernode does not affect the solution process on a single hyperedge. Therefore, we consider hybridized methods ideally suited to PDEs on hypergraphs.

Hybridized discontinuous Galerkin (HDG) methods break the continuity condition~\eqref{EQ:diffusion_cont} by introducing Lagrange multipliers on each hypernode which enforce the continuity of fluxes~\eqref{EQ:interface} weakly.
It turns out though, that the Lagrange multiplier is an approximation to the solution $u$ of~\eqref{EQ:diffusion_primal} on the skeleton itself.

With such methods, the actual PDE \eqref{EQ:diffusion_eq} is represented locally on each hyperedge by Steklov-Poincaré operators on the hyperedges, which transform function values to flux values on the boundary of the hyperedges, a process called ``local solver'' in HDG terminology.
The global problem is posed in terms of the degrees of freedom on the hypernodes only, yielding a square, linear system of equations.

In this respect, HDG methods have a similar structure as the family of HHO methods. These are based on hybridizing the primal formulation and lead to a rather simple error analysis on polytopic meshes where only $L^2$ projections are used (as opposed to the rather complicated projections used for HDG). This is achieved by a novel stabilization design \cite{DiPietro2015}. For recent developments in hybrid high-order and HDG methods, the reader may consult \cite{Burman2018,Qiu2016}.

The separation of the local solution of bulk problems from the global coupling of interface variables is also achieved by the virtual element method \cite{VEM0,VEM1}. Thus, it fits into our view of coupled differential equations on connected hyperedges. Different to the methods discussed so far, it does not rely on polynomial shape functions inside mesh cells but rather on forms of fundamental solutions of any shape \cite{VEM2}. Accordingly, when applied to hypergraphs, the actual type of local solvers and of the specific boundary trace operators will differ from our approach, but remain within the same principal concept.
\subsection{The hybridized dual mixed formulation}\label{SEC:hdg_mixed}
In physical applications, there often is a need to receive reasonable approximations for both the primal unknown $u$ and the dual unknown $\vec q = - \kappa \nabla u$. In other words, considering diffusion, we would like to know both the distribution of some species' concentration and the species ``movement" (flux).  This becomes particularly important if we interpret $u$ as pressure and $\vec q$ as fluid flow through a porous medium (Darcy's equation). In this situation, the flow field $\vec q$ will govern the movement of chemical species dissolved within the fluid. It is often the main quantity of interest and conservativity is crucial. Therefore, we turn to the mixed formulation.

The mixed HDG methods use the weak, dual, mixed, hybrid formulation of \eqref{EQ:diffusion_primal}, i.e., find $(u,\vec q, \lambda) \in L^2(\Omega) \times \bigoplus \Hdiv(\Edge) \times \spaceM$ with $\lambda = u_\textup D$ on all $\Node \in \SetNodeDir$ such that
\begin{subequations}\label{EQ:diffusion_mixed}
\begin{align}
 \int_\Edge \left[ u (\div \vec p ) - \kappa^{-1} \vec q \cdot \vec p \right] \dx & = \int_{\partial \Edge} \lambda \vec p \cdot \Normal \ds && \forall \vec p \in \bigoplus_{\Edge\in\SetEdge} \Hdiv(\Edge),
 \label{EQ:primal_dual}\\
  \int_\Edge v\div \vec q \dx & = \int_\Edge f v \dx && \forall v \in L^2(\Omega),
  \label{EQ:div_is_f}\\
  \sum_{\Edge \in \SetEdge} \int_{\partial\Edge} (\vec q \cdot \Normal) \mu \ds &= \langle g, \mu \rangle_{\spaceM^\star_0, \spaceM_0} && \forall \mu \in \spaceM_0.\label{EQ:interface_mixed}
\end{align}
\end{subequations}

Well-posedness of this formulation can be deduced from Theorem \ref{TH:ex_uni} if $f \in L^2(\Omega)$. Indeed, on the one hand, this implies that the (uniquely existing) solution $\overline{u}$ of Definition \ref{DEF:weak_sol} solves \eqref{EQ:diffusion_mixed} with $u = \overline{u}$, $\vec q = - \kappa \nabla \overline{u}$, and $\lambda = \gamma \overline{u}$. On the other hand, for any solution $(u,\vec q, \lambda) \in L^2(\Omega) \times \bigoplus \Hdiv(\Edge) \times \spaceM$ of \eqref{EQ:diffusion_mixed}, we have $u \in \spaceH$ (by the space's definition), and $\vec q = -\kappa \nabla u$ in the weak sense. Therefore, any solution to \eqref{EQ:diffusion_mixed} satisfies Definition \ref{DEF:weak_sol}.

Equations \eqref{EQ:primal_dual} \& \eqref{EQ:div_is_f} are local equations on the hyperedge, like in the standard case of a domain. They only couple to the Lagrange multipliers on the boundary of the hyperedge. Thus, we can eliminate them locally in the fashion of the Schur complement method.
To this effect, we introduce the local solution operator $S_\Edge\colon \spaceM\to\spaceM^*$ for the right hand side $f=0$. It is in fact a Steklov-Poincaré operator on $\Edge$ mapping the Dirichlet data $\lambda$ to the normal trace of the flux in~\eqref{EQ:interface_mixed}.
  
Then, the solution $\lambda$ of~\eqref{EQ:diffusion_mixed} can be characterized as
\begin{gather}
  \label{EQ:equation-steklov}
      \sum_\Edge \langle S_\Edge\lambda, \mu \rangle_{\spaceM^\star_0, \spaceM_0}
      =
      \langle g, \mu \rangle_{\spaceM^\star_0, \spaceM_0}.
\end{gather}

\begin{remark}
  The Steklov-Poincaré operators $S_\Edge$ in this equation are the same ones as in the case of a manifold. They do not depend on the connectivity of a hypernode to other hyperedges. Thus, their implementation does not differ from that of a standard finite element method. The  only difference lies in the structure of the sum on the left, and is thus almost purely of algebraic nature.
\end{remark}

For inhomogeneous right hand side $f\neq 0$, we can define the operators
\begin{gather}
    \begin{split}
    (\tilde \localU, \tilde \localQ) \colon L^2(\Omega) &\to L^2(\Omega) \times \bigoplus_{\Edge \in \SetEdge} \Hdiv(\Edge)\\
    f &\mapsto (u,\vec q)
    \end{split}
\end{gather}
by an edge-wise solution of \eqref{EQ:primal_dual} and \eqref{EQ:div_is_f} with $\lambda \equiv 0$.

In order to obtain a better understanding of the Steklov-Poincaré operators, we follow the route laid out in \cite{CockburnGL2009} for the discrete version and define the solution operators
\begin{gather}
    \begin{split}
    (\localU, \localQ) \colon \spaceM &\to L^2(\Omega) \times \bigoplus_{\Edge \in \SetEdge} \Hdiv(\Edge)\\
    \lambda &\mapsto (u,\vec q)
    \end{split}
\end{gather}
which map a given $\lambda$ to the element-wise solution of \eqref{EQ:primal_dual} and \eqref{EQ:div_is_f} with $f \equiv 0$.

The well-posedness and linearity of all local solution operators follow directly from the fact (see for instance \cite{BoffiBrezziFortin13}) that the mixed formulation on a single hyperedge is well-posed for any given $\lambda \in H^{1/2}(\partial \Edge)$ and its solution depends continuously on $\lambda$. 
Entering these solution operators into~\eqref{EQ:interface_mixed} yields
\begin{equation}
 - \sum_{\Edge \in \SetEdge} \int_{\partial\Edge} (\localQ\lambda \cdot \Normal) \mu \ds = \sum_{\Edge \in \SetEdge} \int_{\partial\Edge} (\tilde \localQ f \cdot \Normal) \mu \ds - \langle g, \mu \rangle_{\spaceM^\star_0, \spaceM_0},
\end{equation}
since $\vec q = \localQ \lambda + \tilde \localQ f$.
By some simple transformations of~\eqref{EQ:diffusion_mixed}, we can write~\eqref{EQ:equation-steklov} with inhomogeneous right hand sides in terms of bilinear and linear forms. This argument allows to reduce the problem to finding $\lambda \in \spaceM$ with $\lambda = u_\textup D$ on all $\Node \in \SetNodeDir$, such that
\begin{subequations}
\begin{equation}
 a(\lambda, \mu) = b(\mu) \qquad \forall \mu \in \spaceM_0,
\end{equation}
\begin{align}
 a(\lambda, \mu ) & = \sum_{\Edge \in \SetEdge} \int_\Edge \kappa^{-1} \localQ \lambda \cdot \localQ \mu \dx,\label{EQ:cond_bil}\\
 b(\mu) & = \sum_{\Edge \in \SetEdge} \int_{\partial\Edge} (\tilde \localQ f \cdot \Normal) \mu \ds - \langle g, \mu \rangle_{\spaceM^\star_0, \spaceM_0}.
\end{align}
\end{subequations}
Obviously, bilinear form $a$ and linear form $b$ are continuous due to the continuity of operators $\localQ$ and $\tilde \localQ$. Surprisingly, we have recovered a symmetric bilinear form. The following lemma is a key to the discrete well-posedness and adds the fact that this form is even $\spaceM_0$-elliptic.
\begin{lemma}
 If $\kappa \ge \kappa_0 > 0$ and $\kappa \in L^\infty(\Omega)$, bilinear form $a$ from \eqref{EQ:cond_bil} is $\spaceM_0$ elliptic.
\end{lemma}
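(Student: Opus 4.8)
The plan is to reduce the $\spaceM_0$-ellipticity of $a$ (from~\eqref{EQ:cond_bil}) to the Poincaré--Friedrichs inequality of Lemma~\ref{LEM:Poincare1} via the (weighted) harmonic-extension interpretation of the local solver $(\localU,\localQ)$. First I would record what the pair $(\localU\mu,\localQ\mu)$ actually is. Arguing on each hyperedge exactly as in the discussion following~\eqref{EQ:diffusion_mixed} --- integrating by parts in~\eqref{EQ:primal_dual} with $f\equiv 0$, and using that the normal traces of $\Hdiv(\Edge)$-fields exhaust $H^{-1/2}(\partial\Edge)$ --- one finds that $\localU\mu$ restricted to $\Edge$ lies in $H^1(\Edge)$, has boundary trace $\mu_{|\partial\Edge}$, and satisfies $\localQ\mu=-\kappa\nabla(\localU\mu)$ on $\Edge$ in the weak sense. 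Since $\mu\in\spaceM$, these edge traces are consistent across every hypernode, so $\localU\mu\in\spaceH$; and since $\mu\in\spaceM_0$, we have $\trace(\localU\mu)=\mu\in\spaceM_0$, i.e.\ $\localU\mu\in\spaceH_0$.

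With this identification in hand, the energy rewrites as
\begin{equation*}
 a(\mu,\mu)=\sum_{\Edge\in\SetEdge}\int_\Edge\kappa^{-1}|\localQ\mu|^2\dx
 =\sum_{\Edge\in\SetEdge}\int_\Edge\kappa\,|\nabla(\localU\mu)|^2\dx
 \ge\kappa_0\sum_{\Edge\in\SetEdge}\|\nabla(\localU\mu)\|_{L^2(\Edge)}^2,
\end{equation*}
where the second equality uses $\localQ\mu=-\kappa\nabla(\localU\mu)$ together with $\kappa,\kappa^{-1}\in L^\infty(\Omega)$ (the latter because $\kappa\ge\kappa_0>0$), and the inequality uses $\kappa\ge\kappa_0$ a.e. It therefore remains to bound $\|\mu\|_\spaceM^2$ from above by $\sum_{\Edge\in\SetEdge}\|\nabla(\localU\mu)\|_{L^2(\Edge)}^2$.

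Next I would climb back from the edge gradients to the $\spaceM$-norm through $\spaceH$. Since $\SetEdge$ is finite, the Cauchy--Schwarz inequality turns the $\ell^1$-type sum in Lemma~\ref{LEM:Poincare1} into an $\ell^2$-type sum, so applying that lemma to $\localU\mu\in\spaceH_0$ produces a constant $C_1>0$ with
\begin{equation*}
 \sum_{\Edge\in\SetEdge}\|\localU\mu\|_{L^2(\Edge)}^2=\|\localU\mu\|_{L^2(\Omega)}^2\le C_1\sum_{\Edge\in\SetEdge}\|\nabla(\localU\mu)\|_{L^2(\Edge)}^2,
\end{equation*}
whence $\|\localU\mu\|_\spaceH^2\le(C_1+1)\sum_{\Edge\in\SetEdge}\|\nabla(\localU\mu)\|_{L^2(\Edge)}^2$. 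Then continuity of the trace operator $\trace\colon\spaceH\to\spaceM$ (the first of the immediate consequences listed after Definition~\ref{DEF:h}) gives $\|\mu\|_\spaceM=\|\trace(\localU\mu)\|_\spaceM\le C_2\|\localU\mu\|_\spaceH$. Chaining the three estimates yields $a(\mu,\mu)\ge\kappa_0\bigl((C_1+1)C_2^2\bigr)^{-1}\|\mu\|_\spaceM^2$, which is the claimed $\spaceM_0$-ellipticity; note that the constant depends on $\kappa$ only through $\kappa_0$.

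The one genuinely delicate point is the first step --- verifying that the abstract solution operator $\localU$ indeed returns the weighted harmonic extension of $\mu$ with the expected boundary trace, and in particular that $\localU\mu\in\spaceH_0$ whenever $\mu\in\spaceM_0$. Once this $H^1$-regularity and trace identity are secured, everything else is the Poincaré--Friedrichs inequality already established in Lemma~\ref{LEM:Poincare1} together with elementary bookkeeping over the finitely many hyperedges.
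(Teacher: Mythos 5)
Your proof is correct, but it takes a genuinely different route from the paper's. The paper establishes ellipticity by a second, self-contained compactness--contradiction argument on the skeleton: it normalizes a sequence $\lambda_n\in\spaceM_0$ with $\lVert\localQ\lambda_n\rVert_{L^2(\Omega)}\to0$, extracts weak and strong limits, uses the surjectivity of $\mathrm{div}\colon H^1_0(\Edge)^\locDim\to L^2_0(\Edge)$ to show the local primal variable is edgewise constant, and propagates the value $0$ from the Dirichlet nodes by connectedness of $\Omega$. You instead argue directly: identifying $(\localU\mu,\localQ\mu)$ as the weighted harmonic extension with $\localQ\mu=-\kappa\nabla(\localU\mu)$ and $\trace(\localU\mu)=\mu$, you recognize $a(\mu,\mu)$ as the Dirichlet energy of $\localU\mu\in\spaceH_0$ and chain together the already-proved Poincar\'e--Friedrichs inequality of Lemma~\ref{LEM:Poincare1} with the continuity of $\trace\colon\spaceH\to\spaceM$. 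Your route buys an explicit ellipticity constant, reuses the compactness work already invested in Lemma~\ref{LEM:Poincare1} instead of repeating it on $\spaceM_0$, and sidesteps the most delicate step of the paper's version (deducing a contradiction from $\tilde\lambda\equiv0$ really requires strong, not merely weak, convergence of $\lambda_n$ in $\spaceM$, a point the paper leaves implicit). The paper's route, in turn, works purely with the mixed identities \eqref{EQ:primal_dual}--\eqref{EQ:div_is_f} and never needs the $H^1$-regularity and trace identity for the local solver --- precisely the step you correctly flag as the one requiring care. That step is nonetheless sound: testing \eqref{EQ:primal_dual} with compactly supported $\vec p$ gives $\nabla(\localU\mu)=-\kappa^{-1}\localQ\mu\in L^2(\Edge)$, and since normal traces of $\Hdiv(\Edge)$ exhaust $H^{-1/2}(\partial\Edge)$, integration by parts yields $\gamma(\localU\mu)=\mu_{|\partial\Edge}$; single-valuedness of $\mu$ on each hypernode then gives $\localU\mu\in\spaceH_0$. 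So your proof is complete as written.
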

\begin{proof}
Like in the proof of the Poincaré-Friedrichs inequality, we prove ellipticity of $a(\cdot,\cdot)$ by a contradiction argument. To this end, let $\lambda_n$ a sequence in $\spaceM_0$ such that
\begin{gather}\label{EQ:contradiction-poincare2}
    \lVert\lambda_n\rVert_{\spaceM} = 1
    \qquad\text{and}\qquad
    \lVert\localQ\lambda_n\rVert_{L^2(\Omega)} \to 0.
\end{gather}
Thus, there exists a subsequence $\lambda_n \rightharpoonup \tilde\lambda$ in $\spaceM$, and by the compact embedding of $H^{1/2}(\partial\Edge)$ in $L^2(\partial\Edge)$ there holds again for a subsequence $\lambda_n \to \tilde \lambda$ in $L^2(\skeletal)$. Since $\localQ$ is continuous and $\lambda_n$ converges weakly in $H^{\frac12}(\partial E)$ we obtain $\localQ \lambda_n \rightharpoonup Q\tilde\lambda$ weakly in $\Hdiv(\Edge)$ for each hyperdege and therefore also in $L^2(\Omega)$. Thus,~\eqref{EQ:contradiction-poincare2} implies that $\localQ\tilde\lambda = 0$. We denote by $(u,\vec q)$ the solution to~\eqref{EQ:primal_dual} and~\eqref{EQ:div_is_f} associated to $\tilde\lambda$, especially we have $\vec q = \localQ \tilde\lambda = 0$. Hence, for every 
$\Edge \in \SetEdge$ we have
\begin{equation}
 \int_\Edge u (\div \vec p) \dx = \int_{\partial \Edge} \lambda \vec p \cdot \Normal \ds \qquad \forall \vec p \in \Hdiv(\Edge).\label{EQ:const_u}
\end{equation}
Moreover, we have that the divergence operator
\begin{equation}
 \mathrm{div}\colon H^1_0(\Edge)^\locDim \to L^2_0(\Edge) := \{ u \in L^2(\Edge)\colon \int_\Edge u = 0 \}
\end{equation} is surjective, and therefore
\begin{equation}
 \int_\Edge u \varphi \dx = 0 \qquad \forall \varphi \in L^2_0(\Edge).
\end{equation}
This, however, implies that $u$ is constant on $\Edge$. Furthermore, since $u$ is constant on $\Edge$, we can deduce by \eqref{EQ:const_u} and Gauss' divergence theorem that $\tilde\lambda = u$ is constant and
\begin{gather}
    \label{EQ:constant-poincare2}
    \tilde\lambda \equiv \operatorname{const} \quad\text{on}\;\partial \Edge
    \qquad \forall \Edge \in \SetEdge.
\end{gather}
The contradiction argument is concluded by the fact that some hyperedges are adjacent to the Dirichlet nodes and thus  $\tilde\lambda = 0$ on their boundary. For the other hyperedges, $\tilde\lambda = 0$ follows from connectedness of $\Omega$. Thus, $\tilde\lambda\equiv 0$ on $\skeletal$ in contradiction to $\lVert\lambda\rVert_{\spaceM} = 1$. Altogether we showed that for all $\lambda \in \spaceM_0$ it holds that
\begin{align*}
    \lVert \lambda \rVert_{\spaceM} \lesssim \lVert \localQ \lambda \rVert_{L^2(\Omega)}.
\end{align*}
Together with \eqref{EQ:cond_bil} we obtain for a positive constant $\alpha$
\begin{align*}
    a(\lambda,\lambda) \geq \alpha \lVert \lambda \rVert_{\spaceM}^2,
\end{align*}
i.e., the ellipticity of $a$ on $\spaceM_0$.

Thus, the Lax-Milgram lemma guarantees a unique solution as soon as it is clear that $f$ and $g$ generate a right hand side in the dual of $\spaceM_0$. But for $g$ this is obvious as $\spaceM_0\subset L^2(\skeletal)$. For $f\in L^2(\Omega)$, there is a unique solution of~\eqref{EQ:primal_dual} and~\eqref{EQ:div_is_f} with $\lambda=0$. Its trace $\vec q\cdot\Normal$ is in $H^{-1/2}(\partial\Edge)$ for each hyperedge and thus bounded on $\spaceM$.
\end{proof}
\subsection{HDG methods in dual mixed form}\label{SEC:HDG_form}
Let $\hat M$ be some finite dimensional, scalar function space. Then, we define the space of discrete functions on the skeleton $\skeletal$ by
\begin{gather}\label{EQ:skeletal_space}
 \skeletalSpace := \left\{ \lambda \in L^2(\skeletal) \;\middle|\;
 \begin{array}{r@{\,}c@{\,}ll}
  \lambda_{|\Node} &\in& \hat M & \forall \Node \in \SetNode\\
  \lambda_{|\Node} &=& 0 & \forall \Node \in \SetNodeDir    
 \end{array}
 \right\}.
\end{gather}
The mixed HDG methods involve a local solver on each hyperedge $\Edge \in \SetEdge$, producing hyperedge-wise approximations $U_\Edge \in V_\Edge$ and and $\vec Q_\Edge \in \vec W_\Edge$ of the functions $u$ and $\vec q$ in equation \eqref{EQ:diffusion_mixed}, respectively. Here, $V_\Edge$ is some finite dimensional, scalar function space, and $\vec W_\Edge$ is some finite dimensional, vector valued function space. We will also use the concatenations of the spaces $V_\Edge$ and $\vec W_\Edge$, respectively, as a function space on $\Omega$, namely
\begin{gather}\label{EQ:dg_spaces}
 \begin{aligned}
  \discElementSpace &:=\bigl\{ v \in L^2(\Omega) & \big|\;v_{|\Edge} &\in V_\Edge, &\forall \Edge &\in \SetEdge \bigr\},\\
  \vec W &:=\bigl\{ \vec q \in L^2(\Omega;\IR^d) & \big|\;\vec q_{|\Edge} &\in \vec W_\Edge, &\forall \Edge &\in \SetEdge \bigr\}.
 \end{aligned}
\end{gather}
The HDG scheme for \eqref{EQ:diffusion_mixed} on a hypergraph $\Graph$ consists of the local solver and a global coupling equation. The local solver is defined hyperedge-wise by a weak formulation of \eqref{EQ:diffusion_mixed} in the discrete spaces $V_\Edge \times \vec W_\Edge$ and defining suitable numerical traces and fluxes. Namely, given $\lambda \in \skeletalSpace$ find $U_\Edge \in V_\Edge$ and $\vec Q_\Edge \in \vec W_\Edge$ , such that
\begin{subequations}\label{EQ:hdg_scheme}
 \begin{align}
  \int_\Edge \frac{1}{\kappa} \vec Q_\Edge \cdot \vec p \dx - \int_\Edge U_\Edge \Div \vec p \dx & = - \int_{\partial \Edge} \lambda \vec p \cdot \Normal \ds \label{EQ:hdg_primary}\\
  \int_{\partial \Edge} ( \vec Q_\Edge \cdot \Normal + \tau  U_\Edge ) v \ds - \int_\Edge \vec Q_\Edge \cdot \Nabla v \dx
  & = \tau \int_{\partial \Edge} \lambda v \ds \label{EQ:hdg_flux}
 \end{align}
\end{subequations}
hold for all $v \in V_\Edge$, and all $\vec p \in \vec W_\Edge$, and for all $\Edge \in \SetEdge$. Here, $\tau \ge 0$ is the penalty coefficient. While the local solvers are implemented hyperedge by hyperedge, it is helpful for the analysis to combine them by concatenation. Thus, the local solvers define a mapping
\begin{gather}
 \begin{split}
  \skeletalSpace & \to \discElementSpace \times \vec W\\
  \lambda &\mapsto (\localU \lambda, \localQ \lambda),
 \end{split}
\end{gather}
where for each hyperedge $\Edge \in \SetEdge$ holds $\localU \lambda = U_\Edge$ and $\localQ \lambda = \vec Q_\Edge$. Analogously, we set  $\localU (f, u_\textup D)$ and $ \localQ (f, u_\textup D)$, where now the local solutions are defined by the system
\begin{subequations}\label{EQ:hdg_f}
 \begin{align}
  \int_\Edge \frac{1}{\kappa} \vec Q_\Edge \cdot \vec p \dx - \int_\Edge U_\Edge \Div \vec p \dx & = - \int_{\partial \Edge} u_\textup D \vec p \cdot \Normal \ds\label{EQ:hdg_f_primary}\\
  \int_{\partial \Edge} ( \vec Q_\Edge \cdot \Normal + \tau U_\Edge ) v \ds - \int_\Edge \vec Q_\Edge \cdot \Nabla v & \dx \label{EQ:hdg_f_flux}\\
  = & \int_{\Edge} f v \dx +  \tau \int_{\partial \Edge} u_\textup D v \ds\notag
 \end{align}
\end{subequations}

Once $\lambda$ has been computed, the HDG approximation to \eqref{EQ:diffusion_mixed} on $\Graph$ will be computed as
\begin{equation}\label{EQ:construct_unk}
 U = \localU \lambda + \localU (f, u_\textup D), \qquad \vec Q = \localQ \lambda + \localQ (f, u_\textup D)
\end{equation}

The global coupling condition is derived through a discontinuous
Galerkin version of mass balance and reads: Find
$\lambda \in \skeletalSpace$, such that for all
$ \mu \in \skeletalSpace$
\begin{equation}
 \sum_{\edge \in \setEdge} \sum^{\node \in \setNode \setminus \setNodeDir}_{\node \in \edge} \int_\Node \left[ \vec Q \cdot \Normal + \tau (U - \lambda) \right] \mu \ds = \sum_{\node \in \setNode \setminus \setNodeDir} \int_\Node g \mu \ds.\label{EQ:hdg_global}
\end{equation}
\begin{remark}\label{REM:spaces}
Hybridized DG methods in dual mixed form differ by the choice of local polynomial spaces and the stabilization parameter $\tau$. Defining $\polynomials_p$ as the space of multivariate polynomials of degree at most $p$, Table~\ref{tab:hdg-methods} lists some well-known combinations on simplices.
\begin{table}[tp]
    \centering
    \begin{tabular}{l|ccc|c}
         Method &  $\hat M$ & $V_\Edge$ & $\vec W_\Edge$ & $\tau$ \\\hline
         LDG-H & $\polynomials_p$ & $\polynomials_p$ & $\polynomials^d_p$ & $>0$\\
         RT-H & $\polynomials_p$ & $\polynomials_p$
            & $\polynomials^d_p + \vec x \polynomials_p$ & $=0$\\
         BDM-H & $\polynomials_p$ & $\polynomials_{p-1}$
            & $\polynomials^d_p $ & $=0$\\
    \end{tabular}
    \caption{Combinations of local polynomial spaces and stabilization parameters for various hybridized methods.}
    \label{tab:hdg-methods}
\end{table}
Well-posedness of the local solvers for all of them is proven in \cite{CockburnGL2009} and the works cited there.
Analogous combinations based on tensor product polynomials exist for hypercubes.
\end{remark}
Existence and uniqueness of the discrete solution $\lambda$, $U$, and $\vec Q$ to the HDG method can be shown repeating the arguments mentioned in Section \ref{SEC:hdg_mixed} in the finite-dimensional setting. A natural assumption is the well-posedness of the local problems~\eqref{EQ:hdg_scheme}, see Remark~\ref{REM:spaces}.

Given the local solvers, the HDG method for elliptic diffusion problems is consistent with respect to the solution to~\eqref{EQ:diffusion_mixed}. Using consistency, we can immediately apply the analysis in \cite{CockburnGW2009}, as it proceeds locally for each hyperedge. Thus, we obtain optimal convergence rates for LDG-H (and also RT-H by slight adaptions) on simplicial hypergraphs. They also transfer to quadrilateral hypergraphs, since these allow for a Raviart--Thomas projection satisfying equation (2.7) in \cite{CockburnGW2009}.

\subsection{Numerical convergence tests for LDG-H}\label{SEC:diff_conv_ell}
\begin{table}[t]
 \begin{tabular}{cc|@{\,}lcc@{\,}lcc@{\,}lcc}
  \toprule
  \multicolumn{2}{c|@{\,}}{}      && \multicolumn{2}{c}{$\dimEdge=1$}  && \multicolumn{2}{c}{$\dimEdge=2$}   && \multicolumn{2}{c}{$\dimEdge=3$} \\
  \cmidrule{4-5} \cmidrule{7-8} \cmidrule{10-11}
  \multicolumn{2}{c|@{\,}}{mesh}  && err & eoc && err & eoc && err & eoc   \\
  \midrule
  \multirow{6}{*}{\rotatebox[origin=c]{90}{filling}}
  & $i = 0$ && 2.71e-1 & --- && 2.64e-1 & --- && 1.31e-1 & ---  \\
  & $i = 1$ && 9.82e-2 & 1.5 && 7.96e-2 & 1.7 && 3.24e-2 & 2.0  \\
  & $i = 2$ && 4.05e-2 & 1.3 && 2.55e-2 & 1.6 && 8.07e-3 & 2.0  \\
  & $i = 3$ && 1.81e-2 & 1.2 && 8.56e-3 & 1.6 && 2.01e-3 & 2.0  \\
  & $i = 4$ && 8.57e-3 & 1.1 && 2.94e-3 & 1.5 && 5.04e-4 & 2.0  \\
  & $i = 5$ && 4.16e-3 & 1.0 && 1.02e-3 & 1.5 && 1.26e-4 & 2.0  \\
  \midrule
  \multirow{4}{*}{\rotatebox[origin=c]{90}{refinement}}
  & $r = 0$ && 4.05e-2 & --- && 2.55e-2 & --- && 8.07e-3 & ---  \\
  & $r = 1$ && 1.00e-2 & 2.0 && 6.38e-3 & 2.0 && 2.01e-3 & 2.0  \\
  & $r = 2$ && 2.52e-3 & 2.0 && 1.59e-3 & 2.0 && 5.04e-4 & 2.0  \\
  & $r = 3$ && 6.30e-4 & 2.0 && 3.98e-4 & 2.0 && 1.26e-4 & 2.0  \\
  \bottomrule
 \end{tabular}\vspace{1ex}
 \caption{$L^2$ errors (err) and estimated orders of convergence (eoc) of linear approximation to the diffusion equation for hypergraphs with hyperedge dimension $\dimEdge$.}\label{TAB:diff_hg_el_conv}
\end{table}
\begin{figure}
 \begin{tabular}{ccc}
  \includegraphics[width=0.3\textwidth,draft=false]{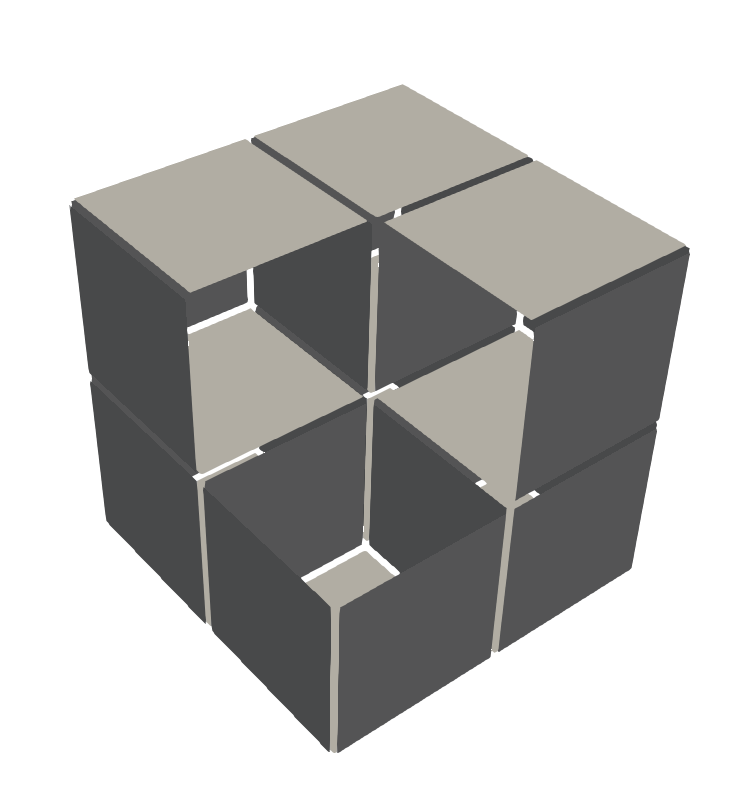} & \includegraphics[width=0.3\textwidth,draft=false]{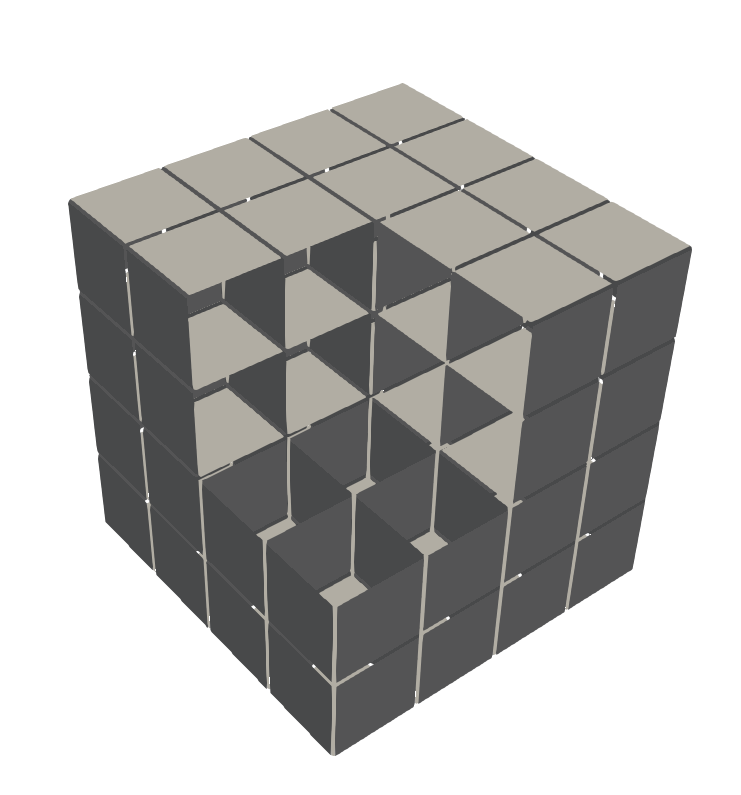} & \includegraphics[width=0.3\textwidth,draft=false]{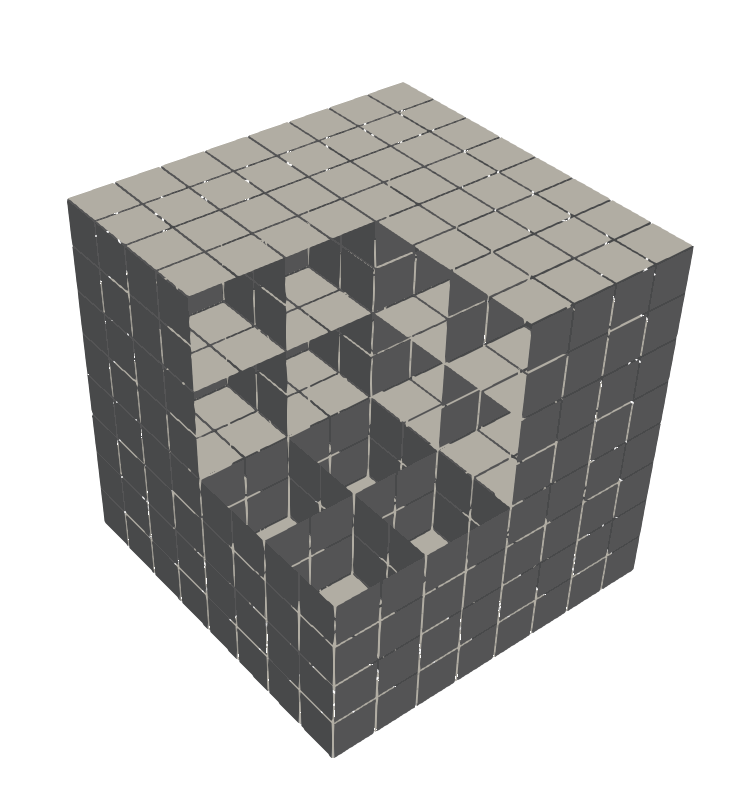} \\
  $i = 1$ & $i = 2$ & $i = 2$ \\
  $r = 0$ & $r = 0$ & $r = 1$ 
 \end{tabular}
 \caption{Pictures of the computational domains for $\locDim = 2$, and different combinations of filling $i$ and refinement $r$. I all three illustrations, the same corner of the ``cubes" has been removed in the plots to illustrate the interior structure of the ``cubes".}\label{FIG:diff_hg_el_conv}
\end{figure}
Next, we consider a convergence example on a hypergraph. It is constructed to approximate
\begin{equation}\label{EQ:hypergraph_example}
 - \nabla \cdot (\kappa \nabla u) = f \text{ in } \Edge \in \SetEdge, \qquad u = u_\text D \text{ on } \Node \in \SetNodeDir,
\end{equation}
where the Dirichlet nodes are those that are located on the boundary of $[0,1]^\globDim$ with $\globDim = 3$.

The filling $i$ indicates that the cube has been $i$ times uniformly refined (in the standard three dimensional sense), and the calculation is conducted on the $\locDim$ dimensional ``surfaces" of this filling. These surfaces themselves might be further refined $r$ times, and these refined surfaces are identified to be our standard hyperedges, see Figure \ref{FIG:diff_hg_el_conv} for an illustration.

The $\locDim-1$ dimensional faces of this approach are interpreted as nodes and the nodes located on the boundary of the unit cube are considered Dirichlet nodes. All other nodes are supposed to be interior nodes. The solution is constructed to be $u = -x^2 - y^2 - z^2$, diffusion coefficient $d = 1$, and right-hand side $f = 2 \locDim$. Of course, polynomial degrees $\ge 2$ are supposed to exactly reproduce the given solution, which is true in our numerical experiments. Thus, we only plot the errors for $p = 1$ in Table \ref{TAB:diff_hg_el_conv}.

Interestingly, the $L^2$ errors converge although with filling $i$, also the computational domain increases for $\locDim = 1$ and $\locDim = 2$. However, the rate of convergence deteriorates by $1$ if $\locDim = 1$, and $\tfrac{1}{2}$ if $\locDim = 2$. The optimal order is obtained for $\locDim = 3$. 

Beyond this, the refinement indicated with $r$ uses filling level $i = 2$ and then uniformly refines the respective faces. This does not lead to an increase of the computational domain (even if $\locDim < \globDim$) and, therefore, gives the optimal convergence rate $p = 2$.

The aforementioned results have been obtained using our code HyperHDG \cite{HyperHDGgithub}.
%
\section{Hypergraph PDE as singular limit}
\label{SEC:limit}
\begin{figure}
 \includegraphics[width=.8\textwidth]{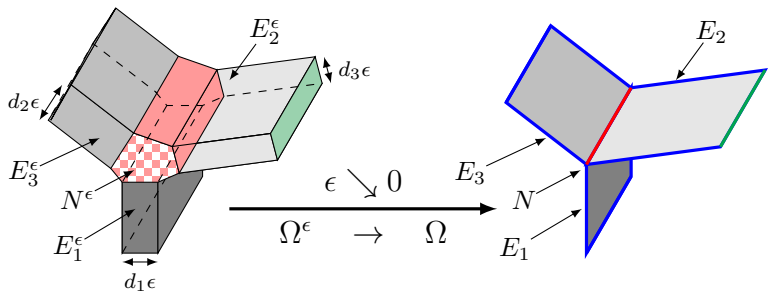}
 \caption{Model problem of a hypergraph as singular limit.
 Extruded domain with \textcolor{red}{red} $\Node$ ($\omega$ is \textcolor{red}{red} checkerboard), and homogeneous Dirichlet boundary \textcolor{Green}{green} on the left.
 Singular limit with $\Node$ depicted \textcolor{red}{red}, and homogeneous Dirichlet node highlighted \textcolor{Green}{green} on the right.}\label{fig:hg_limit}
\end{figure}
The aim of this section is to derive the hypergraph model \eqref{EQ:diffusion_primal} as a singular limit of a 3D-model problem as illustrated in Figure \ref{fig:hg_limit}.
We exemplary use the figure to illuminate the basic ideas: We assume to have a diffusion problem on a domain consisting of three thin plates (in gray) and a (red) joint. This is the problem, which we would like to solve. However, we do not want to solve it in three spatial dimensions, but would like to reduce it to a two-dimensional problem---for example since we have limited compute sources, the domain is very large, or the domain is very complicated to mesh. Thus, we let the thickness of the three plates (and therefore also the thickness of the joint) go to zero by considering $\epsilon \searrow 0$, and construct a two dimensional limit problem. The solution of this two dimensional problem lives on the mid-planes of the three planes and their joint. It in some sense is supposed to approximate the solution of the original (three-dimensional) problem for which $\epsilon$ is a small, positive number.

The principal idea of the limit process is to map equations on the thin structures depending on $\epsilon$ to fixed reference domains independent of $\epsilon$, where we can use standard compactness methods from functional analysis. However, the transformed problem includes $\epsilon$-dependent coefficients. Thus, the crucial point for the derivation of the limit model is to establish \textit{a priori} estimates that are uniform with respect to $\epsilon > 0$.
\subsection{Description of the 3D model problem}
We consider the simplified case of one hypernode $\Node$ with length $L$ connecting $m$ hyperedges $\Edge_i$ for $i=1,\ldots,m$ which are rectangles with side lengths $L$ and $L_i$. Thus, Figure \ref{fig:hg_limit} shows the case with $m=3$. The opposite node of $\Node$ with respect to $\Edge_i$ is denoted by $\Node_{i,e}$ (and is a boundary node).  Without loss of generality, we assume that $\Node$ lies in the $x_1$-axis and we have 
\begin{gather}
    \Node= \{s e_1 \, : \, s \in (0,L)\}.
\end{gather}
We denote by $\nu_i$ a unit normal vector to $E_i$ and define extruded hyperedges for $0 <\epsilon \ll 1$ and $0 < d_i$
\begin{equation*}
\tilde{E}_i^{\epsilon}:= \left\{ x \in \IR^3 \,: \, x = y + s\nu_i \mbox{ for } s \in \left(-\dfrac{d_i \epsilon}{2},\dfrac{d_i \epsilon}{2}\right), \, y \in E_i \right\}.
\end{equation*}
Hence, $\tilde{E}_i^{\epsilon}$ is a hexahedron with side lengths $L$ and $L_i$, and with thickness $d_i \epsilon$. We construct now a domain $\Omega^\epsilon$ which contains the union of all these extruded hyperedges and a nonoverlapping decomposition of this domain.
To this end, let $\alpha>0$ be chosen such that the sets 
\begin{gather}
  \label{EQ:def-eei}
  E_i^{\epsilon}:= \left\{x \in \tilde{E}_i^{\epsilon} \, : \, \mathrm{dist}(x,\tilde{S}_i^{\epsilon}) > \alpha \epsilon \right\},
\end{gather}
do not overlap.
We denote the side of $\tilde{E}_i^{\epsilon}$ that contains $\Node$ by $\tilde{S}_i^{\epsilon}$, and define
\begin{align*}
S_i^{\epsilon}&:= \mathrm{int}\left\{x \in \partial E_i^{\epsilon} \, : \,  \mathrm{dist}(x,\tilde{S}_i^{\epsilon}) = \alpha \epsilon \right\}.
\end{align*}
The side lengths of $E_i^{\epsilon}$ are $L$ and $L_i - \alpha\epsilon$.
Additionally, we define the convex hull of the node $\Node$ and the sides $S_i^{\epsilon}$:
\begin{equation*}
N^{\epsilon}:= \mathrm{int}\left( \mathrm{conv}\{\overline{\Node},\overline{S_1^{\epsilon}},\ldots,\overline{S_m^{\epsilon}}\}\right).
\end{equation*}
By construction, we have
\begin{equation*}
N^{\epsilon} = (0,L)\times \epsilon \omega.
\end{equation*}
Then, we define the thin domain $\Omega^{\epsilon}$ as
\begin{equation*}
\Omega^{\epsilon} := N^{\epsilon} \cup \bigcup_{i=1}^m \left( E_i^{\epsilon} \cup S_i^{\epsilon}\right).
\end{equation*}
On $\Omega^{\epsilon}$ we define a  diffusion problem and then to pass to the limit $\epsilon \searrow 0$ in order to derive a problem on the hypergraph $\Omega = N \cup \bigcup_{i=1}^m E_i$.
To ensure uniqueness for our model we assume a zero-Dirichlet boundary condition on one face $S_D^{\epsilon}$.  We consider the following problem for the unknown $\ueps: \Omega^{\epsilon} \rightarrow \IR$
\begin{subequations}\label{EQ:diff_eps_model}
\begin{align}
- \nabla \cdot \big( \kappa^{\epsilon} \nabla \ueps \big) &= f^{\epsilon} &\mbox{ in }& \Omega^{\epsilon},
\\
\ueps &= 0 &\mbox{ on }& S_D^{\epsilon},
\\
-\kappa^{\epsilon} \nabla \ueps \cdot \nu &= 0 &\text{ on }& \partial \Omega^{\epsilon} \setminus  S_D^{\epsilon}.
\end{align}
\end{subequations}
Here we assume that $\kappa^{\epsilon}$ is piecewise constant on $\Edge^{\epsilon}_i$ and $\Node$, i.e. we have $\kappa^{\epsilon}\vert_{\Edge^{\epsilon}_i} = \kappa_i  > 0$ and $\kappa^{\epsilon}\vert_{\Node} = \kappa_{\Node}  > 0$.  For $f^{\epsilon}$ we assume that $f^{\epsilon}\vert_{\Edge^{\epsilon}_i} = f_i$ with $f_i \in C^0(\IR^3)$ and 
  \begin{gather}
      f^\epsilon|_{\Node^\epsilon} (x)=
      \frac1{\epsilon\lvert\omega\rvert} g(x)
  \end{gather}
with $g \in C^0(\IR^3)$.  
\begin{remark}\label{REM:scale_f}
 We emphasize that the function $f^{\epsilon}_N$ is of order $\epsilon^{-1}$. That is, a nonzero source term $g$ on a hypernode can only be caused by a large sink/source which converges to a measure on the hypernode.
\end{remark}
\begin{definition}
Let
\begin{gather}
    H^1_D(\Omega^{\epsilon}) =
    \bigl\{ u\in H^1(\Omega^{\epsilon}) \bigm\lvert \ueps_{|S_D^{\epsilon}} = 0
    \bigr\}.
\end{gather}
We call $\ueps \in H^1_D(\Omega^{\epsilon})$ a weak solution of $\eqref{EQ:diff_eps_model}$  if for all $\phi^{\epsilon} \in H^1_D(\Omega^{\epsilon})$ there holds
\begin{align}\label{EQ:Var_eps}
\int_{\Omega^{\epsilon}} \kappa^{\epsilon} \nabla \ueps \cdot \nabla \phi^{\epsilon} \dx
= \int_{\Omega^{\epsilon}} f^{\epsilon} \phi^{\epsilon} \dx.
\end{align}
\end{definition}
As the standard solution theory for elliptic equations applies to $\Omega^\epsilon$, the Lax--Milgram Lemma immediately implies the existence of a unique weak solution of $\eqref{EQ:diff_eps_model}$ for all $\epsilon > 0$.
\subsection{Transformation to $\epsilon$ independent domains}
First, we map the domains $E_i^{\epsilon}$ and $N^{\epsilon}$ to fixed domains $\Eref$ and $\Nref$, respectively:
\begin{align*}
 \Eref &:= (0,L) \times (0,L) \times \left(-\frac12,\frac12\right),\\
 \Nref &:= (0,L) \times \omega.
\end{align*}
For $N^{\epsilon}$ we just use a simple scaling:
\begin{equation*}
\Phi_N^{\epsilon}: N^{\epsilon} \rightarrow \Nref,\qquad \Phi_N^{\epsilon}(x)= \ANeps x,
\end{equation*}
where the matrix $\ANeps \in \IR^{3 \times 3} $ is given via
\begin{align*}
\ANeps = \begin{pmatrix}
1 & 0 & 0 \\
0  & \epsilon^{-1} & 0 \\
0 & 0 & \epsilon^{-1}
\end{pmatrix}.
\end{align*}
The transformation between $E_i^{\epsilon}$ and $\Eref$ is defined by  ($i=1,\ldots,m$)
\begin{align}\label{TransformationReferenceElement}
\Phi_i^{\epsilon} : E_i^{\epsilon} \rightarrow \Eref, \qquad \Phi_i^{\epsilon}(x) = \Aieps R_i x + a_i^{\epsilon},
\end{align}
where $R_i \in \IR^{3\times 3}$ is a rotation around the $x_1$-axis, $\Aieps \in \IR^{3\times 3}$ and $a_i^{\epsilon} \in \IR^3$ are given by
\begin{align*}
R_i = \begin{pmatrix}
1 & 0 \\
0 & \bar{R}_i 
\end{pmatrix},
\quad
\Aieps = \begin{pmatrix}
1 & 0 & 0 \\
0 & \frac{L}{L_i - \alpha\epsilon} & 0 \\
0 & 0 & \frac{1}{d_i \epsilon} 
\end{pmatrix}, \quad a_i^{\epsilon} = \begin{pmatrix}
0 \\ - \frac{L}{L_i - \alpha\epsilon} \alpha\epsilon \\ 0
\end{pmatrix},
\end{align*}
where the matrix $\bar{R}_i \in \IR^{2\times 2}$ is a rotation.
More precisely, $R_i$ can be determined by the equation $R_i\nu_i = e_3$, where $e_3$ is the unit vector in $x_3$-direction and $\nu_i$ is a unit normal on $E_i$.

Further, we write 
\begin{align*}
\Psi_N^{\epsilon}(x) &:= (\Phi_N^{\epsilon})^{-1}(x) = (\ANeps)^{-1} x, 
\\
\Psi_i^{\epsilon} (x) &:= (\Phi_i^{\epsilon})^{-1}(x) = R_i^{-1} (\Aieps)^{-1} \big(x - a_i^{\epsilon}\big).
\end{align*} 
The side of $\Eref$ with the homogeneous Dirichlet boundary condition on $\Edge_m$ is denoted by $S_D$.
It is characterized via
\begin{align*}
S_D := \Phi_i^{\epsilon}(S_{D}^{\epsilon})= (0,L)\times \{L\} \times \left(-\frac12,\frac12\right).
\end{align*} 
The sides of $\Nref$ where $\Node^\epsilon$ interfaces to the hyperedges are
\begin{align*}
S_i = \Phi_N^{\epsilon}(S_i^{\epsilon}) \subset (0,L)\times \partial \omega.
\end{align*}

Now we make a change of variables to transform equation $\eqref{EQ:Var_eps}$ to the fixed domains $\Eref$ and $\Nref$. Then we define
\begin{align*}
\ueps_i(x) &:= \ueps \big(\Psi_i^{\epsilon}(x)\big) &&\mbox{ for almost every } x \in \Eref,
\\
\ueps_N(x) &:= \ueps \big(\Psi_N^{\epsilon} (x)\big) &&\mbox{ for almost every } x \in \Nref.
\end{align*}
In other words, we identify the function $\ueps \in H^1_D(\Omega^{\epsilon})$ with the tuple 
\begin{align*}
(\ueps_1,\ldots,\ueps_m, \ueps_N) \in H^1(\Eref)^m \times H^1(\Nref),
\end{align*}
together with the interface and boundary conditions
\begin{align}\label{EQ:inter_bound_eps}
\ueps_N = \ueps_i \circ \Phi_i^{\epsilon} \circ \Psi_N^{\epsilon} \,\, \mbox{ on } S_i, \qquad 
\ueps_m = 0 \,\, \text{ on } S_D.
\end{align}

Next, we use the fact that
\begin{subequations}
\begin{equation}\label{EQ:def_s}
 (\Phi_i^{\epsilon} \circ \Psi_N^{\epsilon})|_{S_i}\colon S_i \to S := (0,L)\times \{0\} \times \left(-\frac12,\frac12\right)
\end{equation}
is an isomorphism between a face of $\Nref$ and $\Eref$. Thus, we can write it in the $\epsilon$-independent form
\begin{equation}\label{EQ:Composition}
 (\Phi_i^{\epsilon} \circ \Psi_N^{\epsilon})|_{S_i} = \underbrace{ \begin{pmatrix} 1  & 0 & 0 \\ 0 & 1 & 0 \\ 0 & 0 & d_i^{-1} \end{pmatrix} }_{ =: A_i } R_i x - de_2.
\end{equation}
\end{subequations}
Note that here, the first and second component of the mapping can be obtained by a simple concatenation of $\Phi^\epsilon_i$ and $\Psi^\epsilon_N$, in particular, the second component is zero ---cf.\ the definition of S in \eqref{EQ:def_s}, since $(R_i x)_2 = d$ for all $x \in S_i$.

By a change of coordinates and an elemental calculation we obtain
\begin{align}
\epsilon^2 \int_{\Nref} & \kappa_{\Node} (\ANeps)^2 \nabla \ueps_N \cdot \nabla \phi_N \dx \notag\\
& + \sum_{i=1}^m \frac{(L_i - \alpha\epsilon) d_i \epsilon}{L}\int_{\Eref} \kappa_i (\Aieps)^2 \nabla \ueps_i \cdot \nabla \phi_i \dx 
\label{EQ:var_eps_ref}\\
= &\frac{\epsilon}{\vert \omega\vert} \int_{\Nref}   g \circ \Psi^{\epsilon}_N \phi_N \dx 
+ \sum_{i=1}^m \frac{(L_i - \alpha\epsilon) d_i \epsilon}{L} \int_{\Eref} f_i \circ \Psi^{\epsilon}_i\phi_i \dx\notag
\end{align}
for all $\phi =(\phi_1,\ldots,\phi_m,\phi_N) \in H^1(\Eref)^m \times H^1(\Nref)$ which fulfills the boundary and interface conditions in $\eqref{EQ:inter_bound_eps}$.
\subsection{A priori estimates that are uniform in $\epsilon$}
In the following we use for $x \in \IR^3$ the notation $x_\parallel = (x_1,x_2)$ and $x_{\perp} = (x_2,x_3)$, as well as $\nabla_{\parallel} = (\partial_1,\partial_2)^T$ and $\nabla_{\perp} = (\partial_2,\partial_3)^T$.

In a first step, we derive \textit{a priori} estimates for $\ueps_N$ and $\ueps_i$ uniformly with respect to $\epsilon$. We define the space
\begin{align*}
H^1(\Nref,\nabla_{\perp}) := \left\{\phi_N \in L^2(\Nref) \, : \, \nabla_{\perp} \phi_N \in L^2(\Nref)^2 \right\},
\end{align*}
with the norm $\Vert \phi_N \Vert^2_{H^1(\Nref,\nabla_{\perp})} := \Vert \phi_N\Vert_{L^2(\Nref)}^2 + \Vert \nabla_{\perp} \phi_N\Vert_{L^2(\Nref)}^2$. The proof of the trace theorem in \cite[Chapter 5.5, Theorem 1]{EvansPartialDifferentialEquations} implies the existence of a bounded linear trace operator
\begin{align*}
    \gamma_{\nabla{\perp}} : H^1(\Nref,\nabla_{\perp}) \rightarrow L^2(S_N)
\end{align*}
for $S_N:= (0,L)\times \partial \omega$.
We use the abbreviations $\phi_{N\vert S_N} := \gamma_{\nabla_{\perp}}(\phi_N)$ and $\phi_{N\vert S_i}$ for the restriction to $S_i$, respectively. Additionally, we have the following Poincaré-inequality:
\begin{lemma}\label{LemmaPoincareInequality}
For all $v = (v_1,\ldots,v_m,v_N) \subset H^1(\Eref)^m \times H^1(\Nref)$ with the boundary conditions $\eqref{EQ:inter_bound_eps}$ it holds 
\begin{multline*}
\sum_{i=1}^m\Vert v_i \Vert_{L^2(\Eref)}+\Vert v_N \Vert_{L^2(\Nref)} \\
\le C \left(\Vert \nabla_{\perp} v_N \Vert_{L^2(\Nref)} + \sum_{i=1}^m \Vert \nabla v_i \Vert_{L^2(\Eref)} \right).
\end{multline*}
\end{lemma}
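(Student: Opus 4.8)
The plan is to argue by contradiction, mimicking the proof of Lemma~\ref{LEM:Poincare1}. Suppose the inequality fails. Then there is a sequence $(v^n) = (v_1^n,\ldots,v_m^n,v_N^n)$ satisfying the interface and boundary conditions \eqref{EQ:inter_bound_eps}, normalized so that
\[
 \sum_{i=1}^m \Vert v_i^n \Vert_{L^2(\Eref)} + \Vert v_N^n \Vert_{L^2(\Nref)} = 1,
\]
while the right-hand side tends to zero, i.e.\ $\Vert \nabla_\perp v_N^n \Vert_{L^2(\Nref)} \to 0$ and $\Vert \nabla v_i^n \Vert_{L^2(\Eref)} \to 0$ for each $i$. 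From the normalization together with the vanishing of $\nabla v_i^n$, each $v_i^n$ is bounded in $H^1(\Eref)$; by Rellich--Kondrachov we extract a subsequence with $v_i^n \to \tilde v_i$ strongly in $L^2(\Eref)$ and weakly in $H^1(\Eref)$, and $\nabla \tilde v_i = 0$, so each $\tilde v_i$ is a constant $c_i$. For the hypernode part one only controls $\nabla_\perp v_N^n$, so $v_N^n$ is bounded only in $H^1(\Nref,\nabla_\perp)$; here I would invoke the trace operator $\gamma_{\nabla_\perp}$ together with a compactness argument in the $\perp$-directions (the cross-section $\omega$ being bounded) to conclude that, along a further subsequence, $v_N^n \to \tilde v_N$ in $L^2(\Nref)$ with $\nabla_\perp \tilde v_N = 0$; that is, $\tilde v_N$ depends only on $x_1$.

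\textbf{Passing the interface and boundary conditions to the limit.} The key step is that the conditions \eqref{EQ:inter_bound_eps} survive the limit. Since the trace operators $H^1(\Eref) \to L^2(S)$ and $\gamma_{\nabla_\perp}\colon H^1(\Nref,\nabla_\perp)\to L^2(S_N)$ are continuous, and since the maps $(\Phi_i^\epsilon\circ\Psi_N^\epsilon)|_{S_i}$ have the $\epsilon$-independent form \eqref{EQ:Composition} (so the composition operators appearing in \eqref{EQ:inter_bound_eps} are fixed, $\epsilon$-independent bounded operators), the relation $v_N^n = v_i^n\circ\Phi_i^\epsilon\circ\Psi_N^\epsilon$ on $S_i$ passes to the limit: $\tilde v_N|_{S_i} = \tilde v_i|_{S_i}\circ(\ldots)$. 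But $\tilde v_i \equiv c_i$, so the trace of $\tilde v_N$ on each $S_i$ equals the constant $c_i$. On the other hand, $\tilde v_N$ depends only on $x_1\in(0,L)$, and the $S_i\subset (0,L)\times\partial\omega$ project onto the full interval $(0,L)$ in the $x_1$-direction; hence the function $x_1\mapsto\tilde v_N(x_1)$ is identically equal to $c_i$ for every $i$, which forces all the $c_i$ to coincide with one another and with $\tilde v_N$. Call this common constant $c$. Finally, the Dirichlet condition $v_m^n = 0$ on $S_D$ passes to the limit to give $c_m = 0$, hence $c = 0$.

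\textbf{Deriving the contradiction.} With $c = 0$ we get $\tilde v_i \equiv 0$ for all $i$ and $\tilde v_N \equiv 0$. But the strong $L^2$ convergences then yield
\[
 \sum_{i=1}^m \Vert v_i^n \Vert_{L^2(\Eref)} + \Vert v_N^n \Vert_{L^2(\Nref)} \;\longrightarrow\; \sum_{i=1}^m \Vert \tilde v_i \Vert_{L^2(\Eref)} + \Vert \tilde v_N \Vert_{L^2(\Nref)} = 0,
\]
contradicting the normalization to $1$. This proves the inequality.

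\textbf{Main obstacle.} I expect the delicate point to be the compactness of $(v_N^n)$ in $L^2(\Nref)$: only the partial gradient $\nabla_\perp v_N^n$ is controlled, not the full gradient, so the standard Rellich argument does not apply directly on $\Nref = (0,L)\times\omega$. One must exploit the geometry — boundedness of the cross-section $\omega$ and the interface coupling through $S_i$ — to gain enough control in the $x_1$-direction; effectively the constancy in $x_1$ of the limit is \emph{inherited} from the hyperedge limits $\tilde v_i$ via the traces on $S_i$, rather than from an a priori bound on $\partial_1 v_N^n$. Making this transfer rigorous (in particular checking that the traces on $S_i$ determine $\tilde v_N$ on all of $\Nref$, using $\nabla_\perp\tilde v_N = 0$) is the heart of the argument; the rest is a routine adaptation of Lemma~\ref{LEM:Poincare1}.
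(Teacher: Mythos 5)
Your argument follows essentially the same contradiction scheme as the paper's proof: normalize the sequence, extract limits, show the hyperedge limits are constants annihilated by the Dirichlet condition on $S_D$, and propagate this to the hypernode part through the $\epsilon$-independent interface maps \eqref{EQ:Composition}. The one point you flag as delicate --- strong $L^2(\Nref)$ compactness of $v_N^n$, which indeed does \emph{not} follow from boundedness in $H^1(\Nref,\nabla_\perp)$ alone (consider $\sin(nx_1)$) --- is glossed over in the paper's proof as well, which only records weak convergence of $v_N^n$ and yet contradicts the normalization as if the $L^2(\Nref)$ norm passed to the limit; your sketched repair is the correct one: for a.e.\ $x_1$ apply a Poincar\'e inequality on the cross-section $\omega$ with boundary term on (the projection of) $S_i$, where the trace of $v_N^n$ coincides with that of $v_i^n$ and hence converges strongly by the compact trace embedding $H^1(\Eref)\hookrightarrow L^2(\partial\Eref)$, so that together with $\nabla_\perp v_N^n\to 0$ one gets $\|v_N^n\|_{L^2(\Nref)}\to 0$ directly and the contradiction follows.
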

\begin{proof}
As for the  Poincaré-inequality in the proof of Theorem \ref{TH:ex_uni} we use a contradiction argument. We assume that there exists a sequence  $(v_1^n,\ldots,v_m^n,v_N^n)_n \subset H^1(\Eref)^m \times H^1(\Nref)$ with the boundary conditions $\eqref{EQ:inter_bound_eps}$, such that
\begin{align}
\begin{aligned}\label{EQ:Poincare_Contradiction}
1 = \sum_{i=1}^m\Vert v_i^n \Vert_{L^2(\Eref)}+\Vert v_N^n & \Vert_{L^2(\Nref)}  
\\
&\geq n \left(\Vert \nabla_{\perp} v_N^n \Vert_{L^2(\Nref)} + \sum_{i=1}^m \Vert \nabla v_i^n \Vert_{L^2(\Eref)} \right).
\end{aligned}
\end{align}
Since $v_i^n$ is bounded in $H^1(\Eref)$ and $v_N^n$ is bounded in $H^1(\Nref,\nabla_{\perp})$, there exist $v_i$ and $v_N$, such that up to a subsequence 
\begin{align*}
v_i^n &\rightharpoonup v_i &\mbox{ weakly in }& H^1(\Eref),
\\
v_i^n &\rightarrow v_i &\mbox{ in }& L^2(\Eref),
\\
v_N^n &\rightharpoonup v_N &\mbox{ weakly in }& H^1(\Nref,\nabla_{\perp}).
\end{align*}
Further, due to \eqref{EQ:Poincare_Contradiction} we have $\nabla v_i = 0 $ and $\nabla_{\perp} v_N = 0$. This implies that $v_i$ is constant on $\Eref$ (and $v_m = 0$ due to the zero boundary condition on $S_D$) and there exists $\bar{v}_N \in L^2(N)$ such that $v_N(x) = \bar{v}_N(x_1)$ for almost every $x \in \Nref$. The continuity of the usual trace operator on $H^1(\Eref)$ and the continuity of $\gamma_{\nabla_{\perp}}$ imply that $v_i(A_iR_ix - de_2)\vert_{S_i} = \bar{v}(x_1)$ on $S_i$.
Further, the continuity of the trace operator implies the weak convergence of the traces (from both sides). Additionally we used, that we have $v_N^n\vert_{S_i} = v_i^n\vert_{S_i}$, which can be shown by a density argument. This implies $v_i = 0$ and $v_N = 0$, which contradicts  $1 = \sum_{i=1}^m\Vert v_i \Vert_{L^2(\Eref)}+\Vert v_N  \Vert_{L^2(\Nref)}  $.
\end{proof}
Now, we obtain the following \textit{a priori} estimates:
\begin{lemma}\label{LE:apriori_estimates}
For $\ueps_i$ and $\ueps_N$ it holds that
\begin{align*}
\epsilon & \Vert \partial_1 \ueps_N \Vert_{L^2(\Nref)} + \Vert \nabla_{\perp} \ueps_N \Vert_{L^2(\Nref)} 
\\
&+ \sum_{i=1}^m \left\{ \sqrt{\epsilon} \Vert \nabla_{\parallel} \ueps_i \Vert_{L^2(\Eref)} + \Vert \partial_3 \ueps_i \Vert_{L^2(\Eref)} \right\} \le C\sqrt{\epsilon}
\end{align*}
for a constant $C > 0$ independent of $\epsilon$.
\end{lemma}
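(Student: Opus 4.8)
The plan is to test the transformed variational identity~\eqref{EQ:var_eps_ref} with the tuple $\phi=(\ueps_1,\dots,\ueps_m,\ueps_N)$ itself, read off an energy identity, estimate its right-hand side with the help of the Poincaré inequality of Lemma~\ref{LemmaPoincareInequality}, and close with a short algebraic bootstrap. This choice of $\phi$ is admissible, since the tuple lies in $H^1(\Eref)^m\times H^1(\Nref)$ and satisfies the interface and boundary conditions~\eqref{EQ:inter_bound_eps} by construction.

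First I would evaluate the left-hand side of~\eqref{EQ:var_eps_ref} for this $\phi$. As $(\ANeps)^2=\operatorname{diag}(1,\epsilon^{-2},\epsilon^{-2})$, the hypernode term equals $\kappa_{\Node}\bigl(\epsilon^2\|\partial_1\ueps_N\|_{L^2(\Nref)}^2+\|\nabla_{\perp}\ueps_N\|_{L^2(\Nref)}^2\bigr)$; as $(\Aieps)^2=\operatorname{diag}\bigl(1,(L/(L_i-\alpha\epsilon))^2,1/(d_i^2\epsilon^2)\bigr)$, the $i$-th hyperedge term has the form $c^\epsilon_{i,\parallel}\|\nabla_{\parallel}\ueps_i\|_{L^2(\Eref)}^2+c^\epsilon_{i,\perp}\|\partial_3\ueps_i\|_{L^2(\Eref)}^2$, where, using $\kappa_i\ge\kappa_0>0$ and $L_i-\alpha\epsilon\ge L_i/2$ for $\epsilon$ small, one has $c^\epsilon_{i,\parallel}\ge c\epsilon$ and $c^\epsilon_{i,\perp}\ge c\epsilon^{-1}$ for a constant $c>0$ independent of $\epsilon$. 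Abbreviating
\begin{align*}
 X^2:={}&\epsilon^2\|\partial_1\ueps_N\|_{L^2(\Nref)}^2+\|\nabla_{\perp}\ueps_N\|_{L^2(\Nref)}^2\\
 &+\sum_{i=1}^m\bigl(\epsilon\|\nabla_{\parallel}\ueps_i\|_{L^2(\Eref)}^2+\epsilon^{-1}\|\partial_3\ueps_i\|_{L^2(\Eref)}^2\bigr),
\end{align*}
the energy identity then yields $X^2\lesssim R$, where $R$ denotes the right-hand side of~\eqref{EQ:var_eps_ref} evaluated at $\phi=\ueps$.

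Next I would bound $R$. The prefactors in front of the two source integrals in~\eqref{EQ:var_eps_ref} are of order $\epsilon$, and since $g$ and the $f_i$ are continuous while $\Psi_N^\epsilon$, resp.\ $\Psi_i^\epsilon$, map $\Nref$, resp.\ $\Eref$, into a bounded set independent of $\epsilon$, the functions $g\circ\Psi_N^\epsilon$ and $f_i\circ\Psi_i^\epsilon$ are bounded in $L^\infty$ uniformly in $\epsilon$; hence, by boundedness of the reference domains and Cauchy--Schwarz, $R\le C\epsilon\bigl(\|\ueps_N\|_{L^2(\Nref)}+\sum_i\|\ueps_i\|_{L^2(\Eref)}\bigr)$. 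Lemma~\ref{LemmaPoincareInequality}, applicable because~\eqref{EQ:inter_bound_eps} holds, bounds the $L^2$ norms by $C\bigl(\|\nabla_{\perp}\ueps_N\|_{L^2(\Nref)}+\sum_i\|\nabla\ueps_i\|_{L^2(\Eref)}\bigr)$; using $\|\nabla_{\perp}\ueps_N\|\le X$, $\|\nabla_{\parallel}\ueps_i\|\le\epsilon^{-1/2}X$ and $\|\partial_3\ueps_i\|\le\epsilon^{1/2}X$, this is $\le C\epsilon\cdot\epsilon^{-1/2}X=C\sqrt\epsilon\,X$ for $\epsilon$ small. Combining, $X^2\lesssim\sqrt\epsilon\,X$, so $X\le C\sqrt\epsilon$. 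Reading the four groups of terms off $X\le C\sqrt\epsilon$ gives $\epsilon\|\partial_1\ueps_N\|\le C\sqrt\epsilon$, $\|\nabla_{\perp}\ueps_N\|\le C\sqrt\epsilon$, $\sqrt\epsilon\|\nabla_{\parallel}\ueps_i\|\le C\sqrt\epsilon$ and $\|\partial_3\ueps_i\|\le C\epsilon\le C\sqrt\epsilon$; summing over the finitely many $i$ and renaming the constant proves the claim.

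The individual computations are routine. The points that need care are the bookkeeping of the $\epsilon$-powers in the weights $c^\epsilon_{i,\parallel}$ and $c^\epsilon_{i,\perp}$ — in particular checking that they are bounded \emph{below} by the stated powers uniformly in $\epsilon$ — and the fact that Lemma~\ref{LemmaPoincareInequality} controls only the transverse gradient $\nabla_{\perp}\ueps_N$ on $\Nref$, not the full gradient. This asymmetry is precisely what dictates the combination of weights defining $X$, and it is also why the bootstrap closes: the net gain is $\epsilon\cdot\epsilon^{-1/2}=\sqrt\epsilon\to0$.
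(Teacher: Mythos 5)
Your proof is correct and follows essentially the same route as the paper: test \eqref{EQ:var_eps_ref} with $\phi=\ueps$, track the $\epsilon$-powers in the weights $(\ANeps)^2$ and $(\Aieps)^2$, bound the source terms via Lemma~\ref{LemmaPoincareInequality}, and close by absorption (the paper uses Young's inequality where you use the equivalent bootstrap $X^2\lesssim\sqrt\epsilon\,X$). Your explicit bookkeeping of the lower bounds $c^\epsilon_{i,\parallel}\gtrsim\epsilon$ and $c^\epsilon_{i,\perp}\gtrsim\epsilon^{-1}$ is in fact slightly more careful than the paper's displayed estimate, which drops the square and the $\epsilon^{-1}$ weight on the $\partial_3\ueps_i$ term.
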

\begin{proof}
Choosing  $\phi_i = \ueps_i$ and $\phi_N = \ueps_N$ as a test-function in $\eqref{EQ:var_eps_ref}$ and using the positivity of $\kappa^{\epsilon}$ and the assumptions on $f^{\epsilon}$, as well as the trace-inequality and the Poincar\'e-inequality from Lemma \ref{LemmaPoincareInequality}, we obtain
\begin{align*}
\epsilon^2 &\Vert \partial_1 u_N^{\epsilon}\Vert_{L^2(\Nref)}^2 + \Vert \nabla_{\perp} \ueps_N \Vert_{L^2(\Nref)}^2
\\
&+ \sum_{i=1}^m \left\{ \epsilon \Vert \nabla_{\parallel} \ueps_i \Vert_{L^2(\Eref)}^2 + \Vert \partial_3 \ueps_i \Vert_{L^2(\Eref)} \right\} 
\\
\le & C \epsilon \Vert g \circ \Psi_{\Node}^{\epsilon} \Vert_{L^2(\Nref)} \Vert \ueps_N \Vert_{L^2(\Nref)} + C\epsilon \sum_{i=1}^m \Vert f_i \circ \Psi_i^{\epsilon} \Vert_{L^2(\Eref)} \Vert \ueps_i \Vert_{L^2(\Eref)} 
\\
\le & C \epsilon \Vert \ueps_N \Vert_{L^2(\Nref)} + C\epsilon \sum_{i=1}^m \Vert \ueps_i \Vert_{L^2(\Eref)} 
\\
\le & C\epsilon + \frac12 \left(  \Vert \nabla_{\perp} \ueps_N \Vert_{L^2(\Nref)}^2 + \epsilon \sum_{i=1}^m \Vert \nabla  \ueps_i \Vert_{L^2(\Eref)}^2 \right)
\end{align*}
The second term on the right-hand side can be absorbed from the left-hand side and we obtain the desired result.
\end{proof}
\subsection{Convergence by compactness results and characterization of limit problem}
Using the weak compactness of the unit ball in $L^2$ and the Rellich-Kondrachov theorem we immediately obtain the following compactness result:

\begin{corollary}\label{CO:convergences_eps}
There exists $u_N^0 \in L^2(\Nref)$ with $\nabla_{\perp} u_N^0 = 0$ in the weak sense, and $u_i^0 \in H^1(\Eref)$ for $i=1,\ldots,m$, such that up to a subsequence
\begin{align*}
\ueps_N &\rightharpoonup u_N^0 &\mbox{ weakly in }& L^2(\Nref),
\\
\ueps_i &\rightarrow u_i^0 &\mbox{ strongly in }& L^2(\Eref),
\\
\nabla \ueps_i &\rightharpoonup \nabla u_i^0 &\mbox{ weakly in }& L^2(\Eref),
\\
\nabla_{\perp} \ueps_N &\rightarrow 0 &\mbox{ strongly in }& L^2(\Nref),
\\
\partial_3 \ueps_i &\rightarrow 0 &\mbox{ strongly in }& L^2(\Eref).
\end{align*}
Especially the last two convergences imply $\nabla_{\perp} \tilde{u}_N^0 = 0$ and $\partial_3 \tilde{u}_i^0 = 0$ in the weak sense. Hence, there exist $u_N^0 \in L^2(N)$ and $u_i^0 \in H^1((0,L)^2)$ such that $\tilde{u}_N^0(x) = u_N^0(x_1)$  for almost every $x \in \Nref$ and $\tilde{u}_i^0(x) = u_i^0(x_\parallel)$ for almost every $x \in \Eref$.
\end{corollary}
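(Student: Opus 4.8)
The plan is to read off everything from the two preceding lemmas by a routine compactness argument on the fixed reference domains. First I would convert the \emph{a priori} bound of Lemma~\ref{LE:apriori_estimates} into the bounds actually needed: dividing the summand $\sqrt\epsilon\,\Vert\nabla_{\parallel}\ueps_i\Vert_{L^2(\Eref)}$ by $\sqrt\epsilon$ and combining it with $\Vert\partial_3\ueps_i\Vert_{L^2(\Eref)}\le C\sqrt\epsilon$ shows that $\nabla\ueps_i$ is bounded in $L^2(\Eref)$ uniformly in $\epsilon$, while $\Vert\nabla_{\perp}\ueps_N\Vert_{L^2(\Nref)}\le C\sqrt\epsilon$ is in particular bounded. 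Feeding these bounds, together with the interface and boundary conditions~\eqref{EQ:inter_bound_eps} satisfied by the tuple $(\ueps_1,\dots,\ueps_m,\ueps_N)$, into the Poincar\'e inequality of Lemma~\ref{LemmaPoincareInequality} then gives that $\ueps_i$ is bounded in $L^2(\Eref)$ and $\ueps_N$ is bounded in $L^2(\Nref)$. Hence $(\ueps_i)_\epsilon$ is bounded in $H^1(\Eref)$ for each $i$ and $(\ueps_N)_\epsilon$ is bounded in $L^2(\Nref)$, all uniformly in $\epsilon$.

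Next I would extract a single subsequence serving the finitely many indices at once. Weak sequential compactness of bounded sets in $L^2(\Nref)$ produces a limit $u_N^0$ with $\ueps_N\rightharpoonup u_N^0$ in $L^2(\Nref)$. Applying the Rellich--Kondrachov theorem on the fixed domain $\Eref$ to the $H^1(\Eref)$-bounded family $\ueps_i$ yields $\ueps_i\to u_i^0$ strongly in $L^2(\Eref)$ and, along a further subsequence, $\ueps_i\rightharpoonup u_i^0$ weakly in $H^1(\Eref)$, whence $\nabla\ueps_i\rightharpoonup\nabla u_i^0$ in $L^2(\Eref)$ for some $u_i^0\in H^1(\Eref)$. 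The two strong convergences $\nabla_{\perp}\ueps_N\to0$ in $L^2(\Nref)$ and $\partial_3\ueps_i\to0$ in $L^2(\Eref)$ are immediate from the right-hand side $C\sqrt\epsilon\to0$ of Lemma~\ref{LE:apriori_estimates}.

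It then remains to identify the structure of the limits. For $\varphi\in C_c^\infty(\Nref)$ and $j\in\{2,3\}$ I would pass to the limit in the identity $\int_{\Nref}(\partial_j\ueps_N)\varphi\dx=-\int_{\Nref}\ueps_N\,\partial_j\varphi\dx$, which is valid because $\ueps_N\in H^1(\Nref)$ for each fixed $\epsilon$: the left side tends to $0$ by the strong convergence of $\nabla_{\perp}\ueps_N$, the right side to $-\int_{\Nref}u_N^0\,\partial_j\varphi\dx$ by the weak convergence of $\ueps_N$, so $\nabla_{\perp}u_N^0=0$ in the weak sense. For $u_i^0$, uniqueness of the weak $L^2$-limit of $\partial_3\ueps_i$, which converges weakly to $\partial_3u_i^0$ and strongly to $0$, gives $\partial_3u_i^0=0$. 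Finally, since $\Nref=(0,L)\times\omega$ with the convex---hence connected---cross-section $\omega$, the relation $\nabla_{\perp}u_N^0=0$ forces $u_N^0$ to depend on $x_1$ only, which is precisely the asserted identification with an element of $L^2((0,L))\cong L^2(N)$; likewise $\partial_3u_i^0=0$ on $\Eref=(0,L)^2\times(-\tfrac12,\tfrac12)$ forces $u_i^0$ to depend on $x_\parallel$ only, and integrating out the unit-length $x_3$-variable shows that the reduced function and its $x_\parallel$-gradient lie in $L^2((0,L)^2)$, i.e.\ the reduced function belongs to $H^1((0,L)^2)$.

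I expect no real obstacle: the argument is standard, and its only delicate spot is the $\epsilon$-bookkeeping in the first step---one has to check that exactly the combination needed for uniform $H^1(\Eref)$-boundedness of $\ueps_i$ survives, and that the pieces tending to zero remain bounded so that Lemma~\ref{LemmaPoincareInequality} can be applied. The decisive structural point, namely that Rellich--Kondrachov is invoked on the fixed reference domains $\Eref$ and $\Nref$ rather than on the shrinking $\epsilon$-domains, has already been arranged by the transformation of the previous subsection.
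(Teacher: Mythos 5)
Your proposal is correct and follows exactly the route the paper intends: the paper states this as an immediate corollary of Lemma~\ref{LE:apriori_estimates} via weak $L^2$-compactness and Rellich--Kondrachov on the fixed reference domains, and you have simply filled in the routine details (the $\epsilon$-bookkeeping that yields uniform $H^1(\Eref)$- and $L^2(\Nref)$-bounds through Lemma~\ref{LemmaPoincareInequality}, and the identification of the limits as functions of $x_1$ resp.\ $x_\parallel$). No gaps.
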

In the following, we drop the notation $\tilde{\cdot}$ and just use the notation $u_N^0 \in L^2(\Nref)$ and $u_i^0\in H^1(\Eref)$ for the limit functions. Let us consider the interface and boundary conditions for these limits. Obviously, the zero boundary condition $\ueps_m = 0 $ on $S_D$ is inherited to  $u_0^m$ by the continuity of the trace operator on $H^1(\Eref)$. 

The weak convergence of $\ueps_N$ in $H^1(\Nref,\nabla_{\perp})$ to $u_N^0$ implies the weak convergence $\ueps_N\vert{S_i} \rightharpoonup u_N^0 $ in $L^2(S_i)$. Further, the weak convergence of $\ueps_i $ in $H^1(\Eref)$ and the compactness of the embedding $H^1(\Eref) \hookrightarrow L^2(\partial \Eref)$ implies the strong convergence   $\ueps_i\vert_{S_i} \rightarrow u_i^0 $ in $L^2(S_i)$. Using $\eqref{EQ:inter_bound_eps}$ and $\eqref{EQ:Composition}$ we obtain for all $\phi \in C_0^{\infty}(S_i)$
\begin{align*}
\int_{S_i} u_N^0 \phi d\sigma &= \lim_{\epsilon\to 0} \int_{S_i} \ueps_N \phi d\sigma 
= \lim_{\epsilon\to 0} \int_{S_i} \ueps_i (A_iR_i x - d e_2) \phi d\sigma
\\
&= \int_{S_i} u_i^0(A_iR_i x - de_2) \phi d\sigma.
\end{align*}
This implies the interface condition
\begin{align}\label{EQ:inter_limit}
u_0^N\vert_{S_i}(x) = u_i^0\vert_{S}  (A_i R_i x - de_2) \quad \mbox{ for almost every } x \in S_i.
\end{align}
Now, let us pass to the limit in the variational equation $\eqref{EQ:var_eps_ref}$ for suitable test-functions. We choose $\phi_N \in C^{\infty}(\overline{N})$ (hence $\phi_N $ is constant on every section $\{x_1\}\times \omega$ of $\Nref$) and $\phi_i \in C^{\infty}(\overline{(0,L)^2})$ with $\phi_i = \phi_N$ on $ (0,L)\times \{0\} $ ($\phi_i$ is constant in $x_3$-direction) and $\phi_m $ has compact support away from $S_D$. Obviously, $(\phi_1,\ldots,\phi_m,\phi_N)$ is an admissible test-function for $\eqref{EQ:var_eps_ref}$. We multiply $\eqref{EQ:var_eps_ref}$ with $1/\epsilon$ and obtain
\begin{align}
\begin{aligned}
\label{EQ:aux_var_eps_limit}
\epsilon &\int_{\Nref} \kappa_N \partial_1 \ueps_N \partial_1 \phi_N \dx 
+ \sum_{i=1}^m \frac{(L_i - \alpha\epsilon) d_i }{L}\int_{\Eref} \kappa_i (\Aieps)^2 \nabla \ueps_i \cdot \nabla \phi_i \dx 
\\
=&  \int_{\Node}  g \circ \Psi^{\epsilon}_N \phi_N \dx_1
+ \sum_{i=1}^m \frac{(L_i - \alpha\epsilon) d_i }{L} \int_{\Eref} f_i \circ \Psi^{\epsilon}_i \phi_i \dx.
\end{aligned}
\end{align}
Now, we pass to the limit $\epsilon \to 0$ in every single term. From Lemma \ref{LE:apriori_estimates}  we immediately obtain that the first term on the left-hand side is of order $\sqrt{\epsilon}$ and vanishes for $\epsilon \to 0$. 
The  convergence of $\nabla \ueps_i$ from Corollary \ref{CO:convergences_eps} implies
\begin{align*}
 \frac{(L_i - \alpha\epsilon) d_i }{L}&\int_{\Eref} \kappa_i (\Aieps)^2 \nabla \ueps_i \cdot \nabla \phi_i \dx 
\\
\overset{\epsilon \to 0}{\longrightarrow}& \frac{L_i d_i}{L} \int_{(0,L)^2} \kappa_i \left( \partial_1 u_i^0 \partial_1 \phi_i + \frac{L^2}{L_i^2}\partial_2 u_i^0 \partial_2 \phi_i \right) \dx_\parallel.
\end{align*}
Further, we have 
\begin{align*}
  \frac{(L_i - \alpha\epsilon) d_i }{L} \int_{\Eref} f_i \circ \Psi^{\epsilon}_i \phi_i \dx \overset{\epsilon \to 0 }{\longrightarrow} \frac{L_i}{L}d_i \int_{(0,L)^2} f_i \circ \Psi_i^0 \phi_i \dx_\parallel,  
\end{align*}
with 
\begin{align*}
    \Psi_i^0(x):= R_i^{-1} \begin{pmatrix}
1 & 0 & 0 \\
0 & \frac{L}{L_i} & 0 \\
0 & 0 & 0
\end{pmatrix}x,
\end{align*}
and in a similar way we get
\begin{align*}
    \int_{\Node}  g \circ \Psi^{\epsilon}_N \phi_N \dx_1 \overset{\epsilon \to 0 }{\longrightarrow} \int_{\Node} g(x_1,0,0) \phi_N \dx_1.
\end{align*}
Now, we define the function $u^0$ on the hypergraph $\Omega$ in the following way: For almost every $x \in E_i $ we define (we emphasize that $(R_ix)_3 = 0$)
\begin{align*}
u^0(x) := u_i^0 (C_iR_i x) \quad \mbox{ with } C_i = \begin{pmatrix}
1 & 0 & 0 \\
0 & \frac{L}{L_i} & 0 \\
0 & 0 & 0
\end{pmatrix}.
\end{align*}
Hence, we have $u^0 \in \spaceH$ with $u^0\vert E_m = 0$ on $\Node_{m,e}$ (see Definition \ref{DEF:weak_sol}), and altogether, we obtain for $\epsilon \to 0$ in $\eqref{EQ:aux_var_eps_limit}$ after a change of coordinates for all $\phi \in \spaceH$ with $\phi_m = 0$ on $\Node_{m,e}$
\begin{multline}
\sum_{i=1}^m d_i \int_{E_i} \kappa_i \nabla_{E_i} u^0_i \cdot \nabla_{E_i} \phi \dx
= \\
 \int_{\Node} g(x_1,0,0)\phi_{\vert\Node} \dx_1
+  \sum_{i=1}^n d_i \int_{\Edge_i}f_i \phi_i \dx.
\end{multline}
In other words, $u^0$ is the unique weak solution of ($\tilde\kappa_i=\tilde\kappa_{\vert E_i}= d_i \kappa_i$)
\begin{subequations}
\begin{align}
- \nabla_{E_i} \cdot (\tilde\kappa_i \nabla_{E_i} u^0) &= d_if_i &\mbox{ for }& i=1,\ldots,m,
\\
u^0 &= 0 &\mbox{ on }& \Node_{m,e},
\\
-  \jump{\tilde\kappa  \nabla_{E} u^0 \cdot \mathbf{n}} &=g &\mbox{ on }& N,
\\
-\tilde\kappa_i \nabla_{E_i} u^0 \cdot \mathbf{n} &= 0 &\mbox{ on }& \Node_{i,e} \mbox{ for } i=1,\ldots,m-1,
\\
u^0\vert_{E_i} &= u^0\vert_{E_j} &\mbox{ on }& \Node \mbox{ for } i,j=1,\ldots,m.
\end{align}
\end{subequations}
\subsection{Concluding remarks}
For the ease of presentation, we have just performed the limit analysis for a very simple model case. Therefore, we have to discuss the validity of our analysis in more general cases.
\begin{enumerate}[leftmargin=*]
    \item Different combinations of boundary conditions, Dirichlet, (inhomogeneous) Neumann, or Robin will yield the same result as long as the variation of boundary functions in direction normal to the hyperedges $\Edge_i$ vanishes for $\epsilon \searrow 0$.
    \item A hyperedge with several ``interior'' hypernodes: after changing the definition of $\Edge_i^\epsilon$ in~\eqref{EQ:def-eei} and the reference mapping $\Phi_i^\epsilon$ in~\eqref{TransformationReferenceElement}, two hypernode domains $\Node_i^\epsilon$ meeting in the same corner will have an intersection, which we have to treat separately in order to have a nonoverlapping decomposition of $\Omega^\epsilon$. But then, the integrals over these corner domains converge to zero by one order faster than the first term in~\eqref{EQ:aux_var_eps_limit}. Thus, they do not enter the limit equation. Obviously, not all nodes can be located in the $x$-axis, but an affine transformation to the reference node $\Nref$ can always be found and the analysis works in the same way as shown.
    \item General hypergraphs with planar hyperedges: after the previous point, it is clear that we can construct and decompose $\Omega^\epsilon$ in the same fashion for any finite hypergraph, provided $\epsilon$ sufficiently small.
    \item Hypergraphs with smooth nonplanar hyperedges: in this case, the reference mappings become nonlinear mappings and many aspects become technically much more involved. The cross section $\epsilon\omega$ of the node $\Node^\epsilon$ may depend smoothly on the tangential coordinate, but it will always have positive diameter and will be bounded as long as the hyperedges are smooth manifolds. Again, there will be an upper bound for $\epsilon$, but the limit properties will not be affected.
    \item Higher dimensional hypergraphs and graphs: here we end up in a situation with new reference domains
    \begin{align*}
 \Eref &:= (0,L)^{\locDim}  \times \left(-\frac12,\frac12\right)^{\globDim-\locDim},\\
 \Nref &:= (0,L)^{\locDim-1} \times \omega
\end{align*}
with $|\epsilon \omega| \sim \epsilon^{\globDim+1-\locDim}$.
\item More general assumptions on $\kappa^{\epsilon}$ are possible. For example   continuity of $\kappa^{\epsilon}$ on $\Edge_i^{\epsilon}$ and $\Node$ as in the definition of $f^{\epsilon}$  is enough (jumps between the different compartments are valid). Another possible choice is to construct $\kappa^{\epsilon}$ from a $L^{\infty}$-function on $\Edge_i$ and $\Node$, constantly extended in normal direction with respect to $\Edge_i$ resp. $\Node$.
\end{enumerate}

We conclude this section with some observations about the singular limit model and the 3D model problem:

First, we observe that in the limit problem the angles under which two or more hyperedges meet have become irrelevant, while these angles certainly have been relevant for the 3D model problem. Note that the definition of $\omega$ needs that there are no angles of zero degrees and if there are small angles, the shape of $\omega$ will compensate this drawback. Since we take $\epsilon \searrow 0$, this shape as well as the volume of $\omega$ is does not affect the limit problem. This effect implies that the $\epsilon$-limit of solutions will be approached slower when angles become small.

Second, nodal sources $g$ correspond to ``strong" sources $f^\epsilon_\Node$: Since the measures (and thereby the effects) of hypernodes are scaled by $\epsilon^2$ and vanish more quickly than those of hyperedges, sources on hypernodes are only relevant if they are of order of $\epsilon^{-1}$, cf.\ Remark \ref{REM:scale_f}. That is, analytically sources even become stronger during the limiting process.

%
\section{Conclusions}\label{SEC:conclusions}
%
We have motivated the formulation of PDEs on geometric hypergraphs, which generalize the notions of ``domains'', ``graphs'', and ``network of surfaces''. Using a simple singular limit example in which several thin subdomains meet in a common inter-domain, we underlined that problems on hypergraphs might evolve from practical applications, and that the hybrid formulations of PDEs is particular useful for their expression. Thus, we used hybrid methods (in particular HDG methods), which intrinsically fit to this formulation, to approximate the solution of simple PDEs on hypergraphs. Doing so, we obtained the optimal convergence behavior, which is predicted by the theory of these methods applied to PDEs on standard domains.
%
\section*{Acknowledgements}
%
This work is supported by the German Research Foundation under Germany's Excellence Strategy EXC 2181/1 - 390900948 (the Heidelberg STRUCTURES Excellence Cluster).

The authors thank Dominic Kempf and the Scientific Software Center of Heidelberg University for their support in developing HyperHDG.
\bibliographystyle{ARalpha}
\bibliography{HyperHDG}
\end{document}